\tikzset{
	MyPersp/.style={scale=2,x={(0.8cm,0cm)},y={(0cm,0.25cm)},
    z={(0cm,1cm)}},
	MyPoints/.style={fill=white,draw=black,thick}
		}
    \def\CM{{\mathbb{C}}}
  \def\hg{{\mathfrak h}}
    \def\PM{{\mathbb{P}}}
    \def\QM{{\mathbb{Q}}}
    \def\RM{{\mathbb{R}}}
    \def\ZM{{\mathbb{Z}}}
\def\a{\alpha}
\def\b{\beta}
\def\g{\gamma}
\def\e{\varepsilon}
\def\l{\lambda}
\def\z{\zeta}
\newtheorem{thm*}{Theorem}
\numberwithin{equation}{section}
\newtheorem{thm}{Theorem}[section]
\newtheorem{prop}[thm]{Proposition}
\theoremstyle{definition}
\theoremstyle{remark}
\newtheorem{rmk}[thm]{Remark}
\newtheorem{ex}[thm]{Example}
\DeclareMathOperator{\Lie}{Lie}
\DeclareMathOperator{\Poinc}{Poinc}
\DeclareMathOperator{\id}{id}
\DeclareMathOperator{\std}{std}
\def\top{{\mathrm{top}}}
\newcommand{\un}{\underline}
\DeclareMathOperator{\Tr}{Tr}
\newcommand{\simto}{\stackrel{\sim}{\longrightarrow}}
\title[Kazhdan-Lusztig conjectures]{Kazhdan-Lusztig conjectures and shadows of Hodge
theory}
\author{Ben Elias}
\address{Department of Mathematics, University of Oregon, Eugene OR, USA.}
\email{belias@uoregon.edu}
\author{Geordie Williamson}
\address{Max-Planck-Institut f\"ur Mathematik, Vivatsgasse 7, 53111,
  Bonn, Germany.
}
\email{geordie@mpim-bonn.mpg.de}
\begin{document}

\begin{abstract}
We give an informal introduction to the authors' work on some
conjectures of Kazhdan and Lusztig, building on work of
Soergel and de Cataldo-Migliorini. This article is an expanded version
of a lecture given by the second author at the Arbeitstagung in memory
of Frederich Hirzebruch.
\end{abstract}

\maketitle

\section{Introduction}

It was a surprise and honour to be able to speak
about our recent work at the Arbeitstagung in memory of
Hirzebruch. These feelings are heightened by the fact that the decisive moments in the development of
our joint work occurred at the Max-Planck-Institut in Bonn, which owes its very existence to
Hirzebruch. In the following introduction we have tried to emphasize the aspects of
our work which we believe Hirzebruch would have most enjoyed: compact
Lie groups and the topology of their homogenous spaces; characteristic
classes; Hodge theory; and more generally the remarkable topological properties of projective
algebraic varieties.

Let $G$ be a connected compact Lie group and $T$ a maximal torus. A fundamental
object in mathematics is the flag manifold $G/T$. We briefly recall Borel's
beautiful and canonical description of its cohomology. Given a character $\l : T \to
\CM^*$ we can form the line bundle
\[
L_\l := G \times_T \CM
\]
on $G/T$, defined as the quotient of $G \times \CM$ by $T$-action
given by $t \cdot (g, x) := (gt^{-1}, \lambda(t)x)$. Taking the Chern
class of $L_\l$ yields a homomorphism
\[
X(T) \to H^2(G/T) : \lambda \mapsto c_1(L_\l).
\]
from the lattice of characters to the second cohomology of
$G/T$. If we identify $X(T) \otimes_\ZM \RM = (\Lie T)^*$ via the
differential and extend multiplicatively we get a morphism of graded
algebras
\[
R := S((\Lie T)^*) \to H^\bullet(G/T; \RM).
\] 
called the \emph{Borel homomorphism}. (We let $R$ denote
the symmetric algebra on the dual of $\Lie T$.) Borel showed that his
homomorphism is surjective and identified its kernel with the ideal
generated by $W$-invariant polynomials of positive degree. Here $W =
N_G(T)/T$ denotes the Weyl group of $G$ which acts on $T$ by conjugation,
hence on $\Lie T$ and hence on $R$.

For example, let $G = U(n)$ be the unitary group, and $T$ the subgroup
of diagonal matrices ($\cong (S^1)^n$). Then the coordinate functions give an
identification $R = \RM[x_1, \dots, x_n]$, and $W$ is the symmetric group
on $n$-letters acting on $R$ via permutation of variables. The
Borel homomorphism gives an identification
\[
\RM[x_1, \dots, x_n]/\langle e_i \;|\; 1 \le i \le n \rangle  = H^\bullet(G/T;\RM) 
\]
where $e_i$ denotes the $i^{th}$ elementary symmetric polynomial in
$x_1, \dots, x_n$.

Let $G_\CM$ denote the complexification of $G$ and choose a Borel
subgroup $B$ containing the complexification of $T$. (For example
if $G = U(n)$ then $G_\CM = GL_n(\CM)$ and we could take $B$ to be the
subgroup of upper-triangular matrices.) A fundamental fact is that the
natural map
\[
G/T \to G_\CM /B
\]
is a diffeomorphism, and $G_\CM/B$ is a projective
algebraic variety.

For example, if $G = SU(2) \cong S^3$ then $G/T = S^2$ is the
base of the Hopf fibration, and the above diffeomorphism is
$S^2 \simto \PM^1\CM$. More generally for $G = U(n)$ the above
diffeomorphism can be seen as an instance of Gram-Schmidt
orthogonalization. Fix a Hermitian form on $\CM^n$. Then $G_\CM/B$ parametrizes complete flags on $\CM^n$, while $G/T$ parametrizes collections of $n$ ordered orthogonal complex lines. These spaces are clearly isomorphic.

The fact that $G/T = G_\CM/B$ is a projective algebraic variety means that its cohomology satisfies a number of deep theorems from complex algebraic geometry. Set $H = H^\bullet(G_\CM/B;
\RM)$ and let $N$ denote the complex dimension of $G_\CM/B$. For us the following two results (the ``shadows of Hodge theory'' of the title) will be of fundamental importance.

\begin{thm}[Hard Lefschetz theorem] \label{hL} Let $\lambda \in H^2$
  denote the Chern class of
  an ample line bundle on $G_\CM/B$ (i.e. $\lambda \in (\Lie T)^*$ is
  a `dominant
  weight', see \eqref{eq:ample cone}). Then for all $0 \le i \le N$
  multiplication by $\lambda^{N-i}$ gives an isomorphism:
\[
\l^{N-i} : H^i \simto H^{2N-i}.
\]  
\end{thm}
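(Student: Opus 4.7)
The plan is to deduce Theorem~\ref{hL} from the classical Hard Lefschetz theorem for compact K\"ahler manifolds, using the fact, already recorded in the excerpt, that $G_\CM/B$ is a smooth projective algebraic variety. Since the assertion is purely topological, we may pass freely between the algebraic and complex-analytic categories.

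The first step is to exhibit a K\"ahler representative of $\lambda$. Because $\lambda$ is the first Chern class of an ample line bundle $L_\lambda$, some tensor power $L_\lambda^{\otimes k}$ is very ample and defines an embedding $G_\CM/B \hookrightarrow \PM^M(\CM)$; the pullback of the Fubini--Study form is a K\"ahler form on $G_\CM/B$ whose de Rham class is $k\lambda$. Equivalently, one endows $L_\lambda$ with a Hermitian metric of positive curvature and takes the Chern curvature form itself as a representative. The second step is to invoke the general Hard Lefschetz theorem: on a compact K\"ahler manifold $(X,\omega)$ of complex dimension $N$, the Lefschetz operator $L := \omega \wedge (-)$ together with its pointwise adjoint $\Lambda$ and the degree-counting operator $H$ (acting on $k$-forms as multiplication by $k-N$) generates an action of $\mathfrak{sl}_2(\RM)$ on the de Rham complex. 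The K\"ahler identities imply that all three commute with the Laplacian and hence descend to an $\mathfrak{sl}_2$-action on harmonic forms, that is, on $H^\bullet(X;\RM)$. Standard finite-dimensional $\mathfrak{sl}_2$-representation theory then shows that $L^{N-i}$ is an isomorphism from the weight-$(i-N)$ space $H^i(X;\RM)$ onto the weight-$(N-i)$ space $H^{2N-i}(X;\RM)$. Applied to $X = G_\CM/B$ with $\omega$ representing $k\lambda$, and noting that the conclusion is invariant under positive rescaling of $\omega$, this yields the desired isomorphism $\lambda^{N-i} : H^i \simto H^{2N-i}$.

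The real content — the \emph{main obstacle} — is not the formal $\mathfrak{sl}_2$ argument but the analytic input underlying it: the existence of harmonic representatives, the K\"ahler identities $[\Lambda,\bar\partial] = i\partial^*$ and $[\Lambda,\partial] = -i\bar\partial^*$, and the consequent commutation of $L$ and $\Lambda$ with the Laplacian. These are standard but non-trivial ingredients of Hodge theory that must be imported wholesale. A purely algebraic proof — one that sidesteps analysis altogether and exploits instead the Bruhat cell decomposition of $G_\CM/B$, or more ambitiously an analogue of Hard Lefschetz for the combinatorially defined category of Soergel bimodules — is of course one of the central themes motivating the present paper; but for the variety $G_\CM/B$ itself, the K\"ahler-theoretic proof is the most economical route.
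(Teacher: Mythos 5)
The paper does not prove Theorem~\ref{hL}: it states it as a classical consequence of Hodge theory for the smooth projective variety $G_\CM/B$, which is precisely the K\"ahler-theoretic argument you outline (the Lefschetz $\mathfrak{sl}_2$-triple $L$, $\Lambda$, $H$, descending to harmonic forms via the K\"ahler identities, followed by finite-dimensional $\mathfrak{sl}_2$ representation theory). Your proof is therefore correct and coincides with the classical proof the paper implicitly invokes, and your closing remark that an algebraic route bypassing Hodge theory is the paper's central motivation is also exactly right.
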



Because $G/T$ is a compact manifold, Poincar\'e duality states that $H^i$ and $H^{2N-i}$ are non-degenerately paired
by the Poincar\'e pairing $\langle -, - \rangle_{\Poinc}$. On the other hand, after fixing
$\lambda$ as above the hard Lefschetz theorem gives us a way of
identifying $H^i$ and $H^{2N-i}$. The upshot is that for $0 \le i \le N$ we obtain a non-degenerate
\emph{Lefschetz form}:
\begin{align*}
H^i \times H^i &\to \RM \\
(\alpha, \beta) & \mapsto \langle \alpha,
\lambda^{N-i} \beta \rangle_{\Poinc}.
\end{align*}
On the middle dimensional cohomology the Lefschetz form is just the
Poincar\'e pairing. This is the only Lefschetz form which
  does not depend on the choice of ample class $\lambda$.

\begin{thm}[Hodge-Riemann bilinear relations] \label{HR} For $0 \le i \le N$ the
  restriction of the Lefschetz form to $P^i := \ker
  (\lambda^{N-i+1}) \subset H^i$ is $(-1)^{i/2}$-definite.
\end{thm}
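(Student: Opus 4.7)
The plan is to deduce this from the classical Hodge--Riemann bilinear relations for compact Kähler manifolds. Because $G_\CM/B$ is a smooth projective variety of complex dimension $N$, any projective embedding endows it with a Kähler structure, and any ample class $\lambda \in H^2$ (in the sense of Theorem \ref{hL}) can be represented by a positive $(1,1)$-form $\omega$.

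The first step is to pin down the Hodge type of the cohomology. The Bruhat decomposition exhibits $G_\CM/B$ as a disjoint union of Schubert cells $BwB/B \cong \AM^{\ell(w)}$, indexed by $w \in W$ and all of even real dimension. This cell structure forces $H^\bullet(G_\CM/B;\RM)$ to be concentrated in even degrees, and, on comparison with the Hodge decomposition of $H^\bullet(G_\CM/B;\CM)$, forces every class to be of pure Hodge type $(p,p)$. In particular $H^i \otimes_{\RM} \CM = H^{i/2,\,i/2}$ for even $i$, and after complexification $P^i$ coincides with the Hodge-theoretic primitive subspace $P^{i/2,\,i/2}$.

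The second step is to apply the Hodge--Riemann theorem directly: on a compact Kähler manifold every nonzero primitive class $\alpha \in P^{p,q}$ satisfies
\[
\sqrt{-1}^{\,p-q}\,(-1)^{(p+q)(p+q-1)/2}\int \alpha \wedge \overline{\alpha} \wedge \omega^{N-p-q}\; >\; 0.
\]
Specialized to a real class $\alpha \in P^i \subset H^{i/2,\,i/2}$ one has $\overline{\alpha} = \alpha$ and $p=q=i/2$, so the factor $\sqrt{-1}^{\,p-q}$ is $1$; and since $i$ is even the parity computation $i(i-1)/2 \equiv i/2 \pmod 2$ collapses the remaining sign to $(-1)^{i/2}$. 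One obtains
\[
(-1)^{i/2}\,\langle \alpha,\lambda^{N-i}\alpha\rangle_{\Poinc}\; >\; 0,
\]
which is precisely the $(-1)^{i/2}$-definiteness of the Lefschetz form on $P^i$.

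The substantive content is the classical Hodge--Riemann theorem itself, established via the Kähler identities and the spectral theory of the Laplacian on harmonic forms; this is the ``deep theorem from complex algebraic geometry'' being invoked. The only input specific to the flag variety is the Hodge--Tate property $H^i = H^{i/2,i/2}$, which comes for free from the Bruhat stratification. There is no genuine obstruction in this geometric setting beyond appealing to classical Hodge theory; the real difficulty, and the subject of the remainder of the paper, is to recover an analogue of this statement in algebraic or combinatorial settings where no underlying Kähler manifold is available.
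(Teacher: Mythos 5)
The paper does not actually prove Theorem~\ref{HR}; it cites the classical Hodge--Riemann relations as a known consequence of Hodge theory and, in the comments following the statement, records precisely the two facts you isolate (vanishing of odd cohomology, $(p,p)$-type of the cohomology) as the reason the simplified real formulation applies. Your proposal is a correct unpacking of exactly that reasoning, and the sign bookkeeping is right: with $i=2j$, $i(i-1)/2 = j(2j-1) \equiv j \pmod 2$, so $(-1)^{i(i-1)/2} = (-1)^{i/2}$.

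One small imprecision is worth flagging. You write that the cell structure ``forces $H^\bullet$ to be concentrated in even degrees, and, on comparison with the Hodge decomposition, forces every class to be of pure Hodge type $(p,p)$.'' Evenness of the cohomological degree alone does not imply Hodge--Tate type (a K3 surface has vanishing odd cohomology yet $h^{2,0}=1$). What actually yields the $(p,p)$ statement is one of the following: (a) the affine cell decomposition shows the cohomology is spanned by classes of the Schubert varieties, which are algebraic cycle classes and hence of type $(p,p)$; or (b) --- the route the paper's introduction suggests --- the Borel homomorphism $R \twoheadrightarrow H^\bullet(G/B;\RM)$ is surjective, so $H^\bullet$ is generated in degree $2$ by Chern classes of line bundles, which lie in $H^{1,1}$, and products of $(1,1)$-classes are $(p,p)$. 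Either repairs the step cleanly, and with that your argument matches the paper's intended justification.
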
 

Some comments are in order:
\begin{enumerate}
\item The odd cohomology of $G/T$ vanishes as can be seen, for example, from the surjectivity of the Borel homomorphism. Hence the
  sign $(-1)^{i/2}$ makes sense.
\item For an arbitrary smooth projective algebraic variety the
  Hodge-Riemann bilinear relations are more complicated, involving the
  Hodge decomposition and a Hermitian form on the complex cohomology
  groups. However, the cohomology of the flag variety is always in $(p,p)$-type, so that we may use the simpler formulation above.
\item We will not make it explicit, but the Hodge-Riemann bilinear relations give formulas for the signatures of all Lefschetz forms in terms of the graded dimension of $H$.
\end{enumerate}

We now come to the punchline of this survey. The hard Lefschetz theorem and Hodge-Riemann bilinear relations for $H^\bullet(G/B; \RM)$ are deep consequences of Hodge theory.
On the other hand, we have seen that the Borel homomorphism gives us
an elementary description of $H^\bullet(G/B; \RM)$ in terms of
commutative algebra and invariant theory. Can one establish
the hard Lefschetz theorem and Hodge-Riemann bilinear relations for $H^\bullet(G/B;\RM)$ algebraically? A crucial motivation for this question is the fact that $H^\bullet(G/B;\RM)$ has
various algebraic cousins (described in \S\ref{arbitrary}) for which no geometric description is known. Remarkably, these cousins still satisfy analogs of Theorems \ref{hL} and \ref{HR}.
Establishing these Hodge-theoretic properties algebraically is the cornerstone of the authors' approach to conjectures of Kazhdan-Lusztig and Soergel.

The structure of this (very informal) survey is as follows. In
\S\ref{ic} we give a lightning introduction to intersection
cohomology, which provides an improved cohomology theory for singular
algebraic varieties. In \S\ref{Schubert} we discuss Schubert
varieties, certain (usually singular) subvarieties of the flag variety
which play an important role in representation theory. We also discuss
Bott-Samelson resolutions of Schubert varieties. In \S\ref{Soergel} we
discuss Soergel modules. The point is that one can give a purely
algebraic/combinatorial description of the intersection cohomology of
Schubert varieties, which only depends on the underlying Weyl group.
In \S\ref{arbitrary} we discuss Soergel modules
for arbitrary Coxeter groups, which (currently) have no geometric
interpretation. We also state our main theorem that these
modules satisfy the ``shadows of Hodge theory''. Finally, in
\S\ref{dihedral} we discuss the amusing example of the coinvariant ring of a
finite dihedral group.

\section{Intersection cohomology and the decomposition theorem} \label{ic}

Poincar\'e duality, the hard Lefschetz theorem and Hodge-Riemann
bilinear relations hold for the cohomology of any smooth projective
variety. The statements of these results usually fail for singular
varieties. However, in the 1970s Goresky and MacPherson invented
intersection cohomology \cite{GM1,GM2} and it was later proven that the analogues of
these theorems hold for intesection cohomology. In this section we will try to give the
vaguest of vague ideas as to what is going on, 
and hopefully convince the reader to go and read more. (The authors'
favourite introduction to the theory is \cite{dCM0} whose emphasis agrees
largely with that of this survey.\footnote{Due, no doubt, to the
  influence which their work has had on the authors.}
 More information is contained in \cite{Borel,Rietsch,Arabia} with the bible being \cite{BBD}. To stay motivated, Kleiman's excellent
history of the subject \cite{Kleiman} is a must.)


Intersection cohomology associates to any complex variety $X$ its
``intersection cohomology groups'' $IH^\bullet(X)$ (throughout this
article we always take coefficients in $\RM$, however there are
versions of the theory with $\QM$ and $\ZM$-coefficients). Here are
some basic properties of intersection cohomology:
\begin{enumerate}
\item $IH^\bullet(X)$ is a graded vector space, concentrated in
  degrees between 0 and $2N$, where $N$ is the complex dimension of
  $X$;
\item if $X$ is smooth then $IH^\bullet(X) = H^\bullet(X)$;
\item if $X$ is projective then $IH^\bullet(X)$ is equipped with a
  non-degenerate Poincar\'e pairing $\langle -, - \rangle_{Poinc}$,
  which is the usual Poincar\'e pairing for $X$ smooth.
\end{enumerate}
However we caution the reader that:
\begin{enumerate}
\item the assignment $X \mapsto IH^\bullet(X)$ is not functorial: in general a
  morphism $f : X \to Y$ does not induce a pull-back map on intersection
  cohomology;
\item $IH^\bullet(X)$ is not a ring, but rather a module over the
  cohomology ring $H^\bullet(X)$.
\end{enumerate}
(These two ``failings'' become less worrying when one interprets
intersection cohomology in the language of constructible sheaves.) 
Finally, we come to the two key properties that will concern us in
this article. We assume that $X$ is a
projective variety (not necessarily smooth):
\begin{enumerate}
\item multiplication by the first Chern class of an ample line bundle
  on $IH^\bullet(X)$ satisfies the hard Lefschetz theorem;
\item the groups $IH^\bullet(X)$ satisfy the Hodge-Riemann bilinear relations.
\end{enumerate}
(To make sense of this second statement, one needs to know that
$IH^\bullet(X)$ has a Hodge decomposition. This is true, but we will
not discuss it. Below, we will only consider varieties whose Hodge
decomposition only involves components of type $(p,p)$ and so the
naive formulation of the Hodge-Riemann bilinear relations in the form
of Theorem \ref{HR} will be sufficient.)

\begin{ex} \label{ex1}
Consider the
Grassmannian $Gr(2,4)$ of planes in $\CM^4$.
It is a smooth projective algebraic variety of complex dimension $4$.
Let $0 \subset \CM
\subset \CM^2 \subset \CM^3 \subset \CM^4$ denote the standard
coordinate flag on $\CM^4$.
For any sequence of natural numbers $\un{a} := (0 = a_0 \le a_1 \le a_2 \le a_3
\le a_4 = 2)$ satisfying $a_i \le a_{i+1} \le a_i + 1$, consider the subvariety
\[
C_{\un{a}} := \{ V \in Gr(2,4) \; | \; \dim(V \cap \CM^i)  = a_i \}.
\]
It is not difficult (by writing down charts for the Grassmannian)
to see that each $C_{\un{a}}$ is isomorphic to $\CM^{d(\un{a})}$ where
$d(\un{a}) = 7 - \sum_{i=0}^4 a_i$. Hence
$Gr(2,4)$ has a cell-decomposition with cells of real dimension
$0,2,4,4,6,8$. The cohomology $H^\bullet(Gr(2,4))$ is as
follows:
\begin{equation*}
\begin{array}{|c|c|c|c|c|c|c|c|c|}
 \hline 
0& 1 & 2 & 3 & 4 & 5 & 6 & 7 & 8 \\
 \hline 
\RM & 0 & \RM & 0& \RM^{2} & 0 & \RM & 0 & \RM \\
\hline
\end{array}
\end{equation*}
It is an easy exercise to use Schubert calculus (see
e.g. \cite[III.3]{Hiller}, which also discusses $Gr(2,4)$ in more detail) to check the hard
Lefschetz theorem and Hodge-Riemann bilinear relations by hand.

Now consider the subvariety
\[
X := \{ V \in Gr(2,4) \; | \; \dim (V \cap \CM^2) \ge 1 \}.
\]
Then $X$ coincides with the closure of the cell $C_{0 \le 0 \le 1 \le
  1 \le 2} \subset Gr(2,4)$ (and thus is an example of a ``Schubert
variety'', as we will discuss in the next section).
Hence $X$ has real dimension 6 and has a cell-decomposition with cells of dimension
$(0,2,4,4,6)$. Its cohomology is as follows:
\begin{equation*}
\begin{array}{|c|c|c|c|c|c|c|}
 \hline 
0& 1 & 2 & 3 & 4 & 5 & 6 \\
 \hline 
\RM & 0 & \RM & 0& \RM^{ 2} & 0 & \RM \\
\hline
\end{array}
\end{equation*}
We conclude that $X$ cannot satisfy Poincar\'e duality or the hard
Lefschetz theorem. In particular $X$ must be
singular. In fact, $X$ has a unique singular point $V_0 =
\CM^2$. We will see below that the intersection cohomology
$IH^\bullet(X)$ is as follows:
\begin{equation*}
\begin{array}{|c|c|c|c|c|c|c|}
 \hline 
0& 1 & 2 & 3 & 4 & 5 & 6 \\
 \hline 
\RM & 0 & \RM^{ 2} & 0& \RM^{ 2} & 0 & \RM \\
\hline
\end{array}
\end{equation*}
So in this example $IH^\bullet(X)$ seems to fit the bill (at least on
the level of Betti numbers) of
rescuing Poincar\'e duality and the hard Lefschetz theorem in a ``minimal'' 
way.
\end{ex}

Probably the most fundamental theorem about intersection cohomology is
the decomposition theorem. In its simplest form it says the following:

\begin{thm}[Decomposition theorem \cite{BBD, Saito, dCM, dCM2}] Let $f : \widetilde{X} \to X$ be a resolution,
  i.e. $\widetilde{X}$ is smooth and $f$ is a projective birational
  morphism of algebraic varieties. Then $IH^\bullet(X)$ 
  is a direct summand of $H^\bullet(\widetilde{X})$, as modules over
  $H^\bullet(X)$.\end{thm}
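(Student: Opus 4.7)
The plan is to work in the bounded derived category $D^b_c(X)$ of constructible sheaves on $X$ and to exhibit the intersection cohomology complex $IC_X$ as a direct summand of $Rf_* \un{\RM}_{\widetilde{X}}$ (up to an appropriate shift). Applying hypercohomology $\HM^\bullet(X, -)$ then immediately gives the desired splitting of $H^\bullet(\widetilde{X})$, with one summand equal to $IH^\bullet(X)$.

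The first and main step is to invoke the full decomposition theorem: since $\widetilde{X}$ is smooth and $f$ is projective, the complex $Rf_* \un{\RM}_{\widetilde{X}}[\dim \widetilde{X}]$ is a semisimple perverse sheaf on $X$, and so $Rf_* \un{\RM}_{\widetilde{X}}$ decomposes as a finite direct sum of shifts of IC sheaves of simple local systems on closures of strata of $X$. The next step is to single out $IC_X$ itself as one of these summands. For this I would use a restriction argument: because $f$ is birational there is a dense open $U \subset X$ (in the smooth locus) over which $f$ is an isomorphism, so $(Rf_* \un{\RM}_{\widetilde{X}})|_U \cong \un{\RM}_U$. Among the summands of $Rf_* \un{\RM}_{\widetilde{X}}$ exactly one has full support $X$ and restricts to $\un{\RM}_U$ in degree $0$ on $U$; by the defining extension property of intersection cohomology, this summand must be $IC_X$ (placed in the appropriate degree).

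For the $H^\bullet(X)$-module statement, the key observation is that the adjunction unit is a canonical morphism $\un{\RM}_X \to Rf_* \un{\RM}_{\widetilde{X}}$ in $D^b_c(X)$, and $H^\bullet(X) = \operatorname{Ext}^\bullet(\un{\RM}_X, \un{\RM}_X)$ acts on the hypercohomology of any constructible complex by pre-composition. Any endomorphism of $Rf_* \un{\RM}_{\widetilde{X}}$ — in particular the idempotent projecting onto the $IC_X$-summand — commutes with this pre-composition, so on passing to $\HM^\bullet(X, -)$ the splitting is automatically a splitting of $H^\bullet(X)$-modules.

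The hard part is the decomposition theorem itself. The proof most in keeping with the philosophy of this survey is the classical-Hodge-theoretic one of de Cataldo-Migliorini: one establishes \emph{relative hard Lefschetz} for $f$ together with Hodge-Riemann bilinear relations on the perverse cohomology sheaves of $Rf_* \un{\RM}_{\widetilde{X}}$, after which semisimplicity follows formally. These inputs are exactly of the flavour of Theorems~\ref{hL} and~\ref{HR}, but relative over $X$, and this is where the real content lies. For the weaker statement needed here (only that $IC_X$ appears, not full semisimplicity), one might hope to shortcut the argument, but constructing the projector onto $IC_X$ still seems to require a relative polarization, and this is the step I would expect to be the principal obstacle.
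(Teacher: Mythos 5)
The paper does not prove this theorem; it is quoted as a known result with references to BBD, Saito, and de Cataldo--Migliorini, so there is no in-paper argument to compare against. Your sketch is, however, the correct and standard derivation of this statement from the full decomposition theorem. The three steps are all sound: (i) by BBD (or Saito, or de Cataldo--Migliorini), $Rf_* \un{\RM}_{\widetilde X}$ is a direct sum of shifted $IC$ sheaves on strata; (ii) since $f$ is proper and an isomorphism over a dense smooth open $U \subset X$, proper base change gives $(Rf_* \un{\RM}_{\widetilde X})|_U \cong \un{\RM}_U$ concentrated in degree $0$, so exactly one summand is supported on all of $X$ and it must be $IC_X$ by the uniqueness of the intermediate extension; (iii) the $H^\bullet(X)$-module structure on hypercohomology is functorial in the coefficient complex, so the idempotent projecting onto the $IC_X$-summand is automatically $H^\bullet(X)$-linear, and the splitting descends. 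One could phrase (iii) slightly more simply by saying that cup product with $H^\bullet(X)$ on $\HM^\bullet(X,-)$ is a natural transformation, so any morphism in $D^b_c(X)$ induces an $H^\bullet(X)$-module map, but your $\operatorname{Ext}$-algebra formulation is equivalent.

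You are right that the real content is hidden in the decomposition theorem itself, and you are right that de Cataldo--Migliorini's classical-Hodge-theoretic proof (relative hard Lefschetz plus Hodge--Riemann on the perverse cohomology sheaves, yielding semisimplicity) is the one most in the spirit of this survey; indeed the paper explicitly flags that proof as the inspiration for the authors' own work. You are also right that even the weaker claim needed here --- that $IC_X$ appears as a summand --- does not obviously avoid a relative polarization: the projector onto the $IC_X$ piece is not constructed topologically but comes from the Hodge-theoretic (or weight-theoretic) splitting, so there is no easy shortcut. As a complete proof your sketch therefore reduces the statement to a deep cited theorem, which is exactly how the paper treats it.
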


The decomposition theorem provides an invaluable tool for calculating
intersection cohomology, which is otherwise a very difficult task.

\begin{ex} In Example \ref{ex1} we discussed the variety
\[
X := \{ V \in Gr(2,4) \; | \; \dim (V \cap \CM^2) \ge 1 \}
\]
which is projective with unique singular point $V_0 = \CM^2$.
Now $X$ has a natural resolution $f : \widetilde{X} \to X$ where
\[
\widetilde{X} = \{ (V, W) \in Gr(2,4) \times \PM(\CM^2) \; | \; W
\subset V \cap \CM^2 \}
\]
and $f(V,W) = V$. Clearly $f$ is an isomorphism over $X \setminus \{
V_0 \}$ and has fibre $\PM^1= \PM(\CM^2)$ over the singular point $V_0$. Also, the projection $(V,W)
\mapsto W$ realizes $\widetilde{X}$ as a $\PM^2$-bundle over
$\PM^1$. In particular, $\widetilde{X}$ is smooth and its cohomology
is as follows:
\begin{equation*}
\begin{array}{|c|c|c|c|c|c|c|}
 \hline 
0& 1 & 2 & 3 & 4 & 5 & 6 \\
 \hline 
\RM & 0 & \RM^{ 2} & 0& \RM^{ 2} & 0 & \RM \\
\hline
\end{array}
\end{equation*}

We conclude
by the decomposition theorem that $IH^\bullet(X)$ is a summand of
$H^\bullet(\widetilde{X})$. In this case one has equality:
$IH^\bullet(X) = H^\bullet(\widetilde{X})$. One can see this directly
as follows: first one checks that the pull-back map
$H^i(X) \to H^i(\widetilde{X})$ is injective.
Now, because $IH^\bullet(X)$ is an $H^\bullet(X)$-stable
summand of $H^\bullet(\widetilde{X})$ containing $\RM =
H^0(\widetilde{X})$ we conclude that $IH^i(X)  = H^i(\widetilde{X})$
for $i \ne 2$. Finally, we must have $IH^2(X) =  H^2(\widetilde{X})$
because $IH^\bullet(X)$ satisfies Poincar\'e duality.

Let us now discuss the hard Lefschetz theorem and Hodge-Riemann
bilinear relations for $IH^\bullet(X)$. Let $\lambda$ be the class of an ample line bundle on $X$. Because $IH^\bullet(X) =
H^\bullet(\widetilde{X})$ in this example, the action of $\lambda$ on $IH^\bullet(X)$ is identified with the action of $f^*\lambda$ on $H^\bullet(\widetilde{X})$.
We would like to know that $f^*\lambda$ acting on $H^\bullet(\widetilde{X})$ satisfies the
the hard Lefschetz theorem and Hodge-Riemann bilinear relations
\emph{even though $f^*\lambda$ is not an ample class on
  $\widetilde{X}$}. This simple observation is the starting point for
beautiful work of de Cataldo and Migliorini \cite{dCM,dCM2}, who give a Hodge-theoretic proof of the decomposition theorem.
\end{ex}

\section{Schubert varieties and Bott-Samelson resolutions} \label{Schubert}

Recall our connected compact Lie group $G$, its complexification $G_\CM$, the
maximal torus $T \subset G$ and the Borel subgroup $T \subset B
\subset G_\CM$. To $(G,T)$ we may associate a root system $\Phi \subset (\Lie
T)^*$. Our choice of Borel subgroup is equivalent to a choice of
simple roots $\Delta \subset \Phi$. As we discussed in the
introduction, the Weyl group $W = N_G(T) / T$ acts on $\Lie T$ as a
reflection group. The choice of simple roots $\Delta \subset \Phi$
gives a choice of \emph{simple reflections} $S \subset W$. These simple
reflections generate $W$ and with respect to these generators $W$ admits a
\emph{Coxeter presentation}:
\[
W = \langle s \in S \; | s^2 = \id, (st)^{m_{st}} = \id \rangle
\]
where $m_{st} \in \{ 2, 3, 4,6 \}$ can be read off the Dynkin
diagram of $G$. Given $w \in W$ a \emph{reduced expression} for $w$ is
an expression $w = s_1 \dots s_m$ with $s_i \in S$, having shortest length
amongst all such expressions. The \emph{length} $\ell(w)$ of $w$ is the
length of a reduced expression. The Weyl group $W$
is finite, with a unique longest element $w_0$.

From now on we will work with the flag variety $G_\CM/ B$ in its
incarnation as a projective algebraic variety. It is an important fact
(the ``Bruhat decomposition'') that $B$ has finitely many orbits on
$G_\CM/B$
which are parametrized by the Weyl group $W$. In formulas we write:
\[
G_\CM/B = \bigsqcup_{w \in W} B \cdot wB/B
\]
Each $B$-orbit $B \cdot wB/B$ is isomorphic to an affine space and its
closure
\[
X_w := \overline{B \cdot wB/B}
\]
is a projective variety called a \emph{Schubert variety}. It is of complex dimension
$\ell(w)$. The two extreme cases are $X_{\id}= B/B$, a point, and
$X_{w_0} = G_\CM / B$, the full flag variety.

More generally, given any subset $I \subset S$ we have a parabolic
subgroup $B \subset P_I \subset G$ generated by $B$ and (any choice of
representatives of) the subset $I$. The quotient $G/P_I$ is also a
projective algebraic variety (called a \emph{partial flag variety})
and the Bruhat decomposition takes the
form
\[
G/P_I := \bigsqcup_{w \in W^I} B \cdot wB/P_I
\]
where $W^I$ denotes a set of minimal length representatives for the
cosets $W/W_I$. Again, the Schubert varieties are the closures $X_w^I
:= \overline{B \cdot wB/P_I} \subset G/P_I$, which are projective
algebraic varieties of dimension $\ell(w)$.

\begin{ex} We discussed the more general setting of $G/P_I$ to make contact with
  the Grassmannian in Example \ref{ex1}. Indeed, $Gr(2,4) \cong
  GL_4(\CM)/P$ where $P$ is the stabilizer of the fixed coordinate subspace $\CM^2
  \subset \CM^4$. If $B$ denotes the stabilizer of the coordinate flag
  $0 \subset \CM^1 \subset \CM^2 \subset \CM^3 \subset \CM^4$ (the upper triangular matrices) then the cells $C_{\un{a}}$ of
  Example \ref{ex1} are $B$-orbits on $Gr(2,4)$. Hence our $X$ is an example of a singular Schubert variety.

\end{ex}

Schubert varieties are rarely smooth. We now discuss how to
construct resolutions. We will focus on Schubert varieties in the full flag
variety, although similar constructions work for Schubert varieties in
partial flag varieties. Choose $w \in W$ and fix a reduced expression $w = s_1 s_2 \dots
s_m$. For any $1 \le i \le m$ let us alter our notation and write
$P_i$ for $P_{\{s_i\}} = \overline{Bs_iB}$, a (minimal) parabolic
subgroup associated to the reflection $s_i$. Consider the space
\[
BS(s_1, \dots, s_m ) := P_1 \times^B P_2 \times^B \dots \times^B P_m/B.
\]
The notation $\times^B$ indicates that $BS(s_1, \dots, s_m)$ is the quotient of $P_1 \times P_2 \times \dots \times
P_m$ by the action of $B^m$ via
\[
(b_1, b_2, \dots, b_m ) \cdot (p_1, \dots, p_m) = (p_1b_1^{-1},
b_1p_2b_2^{-1}, \dots, b_{m-1}p_mb_{m}^{-1}).
\]
Then $BS(s_1, \dots, s_m)$ is a smooth projective \emph{Bott-Samelson
variety} and the
multiplication map $P_1 \times \dots \times P_m \to G$ induces a
morphism
\[
f : BS(s_1, \dots, s_m) \to X_w
\]
which is a resolution of $X_w$. (See \cite{DemBS,Hansen} and \cite[\S 2]{Brion} for further
discussion and applications of Bott-Samelson resolutions. The name
Bott-Samelson resolution comes from \cite{BS} where related spaces are considered in
the context of loop spaces of compact Lie groups.)

\begin{ex} \label{ex:bs}
  If $G_\CM = GL_n$, Bott-Samelson resolutions admit a
  more explicit description. Recall that $GL_n/B$ is the variety of
  flags $V_\bullet = (0 = V_0 \subset V_1 \subset V_2 \subset \dots \subset
  V_n = \CM^n)$ with
  $\dim V_i = i$. We identify $W$ with the symmetric
  group $S_n$ and $S$ with the set of simple transpositions $\{ s_i =
  (i,i+1) \; | \; 1 \le i \le n-1 \}$. Given a reduced expression
  $s_{i_1} \dots s_{i_m}$ for $w \in W$ consider the variety
  $\widetilde{BS}(s_{i_1}, \dots , s_{i_m})$ of all
  $m$-tuples of flags $(V_\bullet^a)_{0 \le a \le m}$ such that:
  \begin{enumerate}
  \item  $V^0_\bullet$ is the coordinate flag $V_\bullet^{\std{}} = (0 \subset \CM^1 \subset
    \dots \subset \CM^n)$;
  \item for all $1 \le a \le m$, $V^a_j = V_j^{a-1}$ for $j \ne i_a$. 
  \end{enumerate}
 That is, 
  $\widetilde{BS}(s_{i_1}, \dots , s_{i_m})$ is the variety of 
sequences of $m+1$ flags which begin at the coordinate flag, and
where, in passing from the $(j-1)^{\textrm{st}}$ to the $j^{\textrm{th}}$ step, we are
only allowed to change the  $i_j^{\textrm{th}}$ dimensional subspace.

Let $p_0 = 1$. Then the map 
\[
(p_1, \dots, p_m) \mapsto (p_0 \dots p_a V_\bullet^{\std{}})_{a = 0}^m
\]
gives an isomorphism $BS(s_1, \dots,
s_m) \to \widetilde{BS}(s_1, \dots, s_m)$. Under this isomorphism the
map $f$ becomes the projection to the final flag: $f((V_\bullet^a)_{a
  = 1}^m) = V_\bullet^m$.
\end{ex}

\section{Soergel modules and intersection cohomology} \label{Soergel}

In a landmark paper \cite{So}, Soergel explained how to calculate the
intersection cohomology of Schubert varieties in a purely algebraic
way. Though much less explicit, one way of viewing this result is as a generalization of Borel's
description of the cohomology of the flag variety. 

The idea is as follows. In the last section we discussed the
Bott-Samelson resolutions of Schubert varieties
\[
f : BS(s_1, \dots, s_m) \to X_w \subset G_{\CM}/B
\]
where $w = s_1 \dots s_m$ is a reduced expression for $w$. By the
decomposition theorem $IH^\bullet(X_w)$, the intersection cohomology
of the Schubert variety $X_w \subset G_{\CM}/B$, is a summand of
$H^\bullet(BS(s_1, \dots, s_m))$. Moreover, we have pull-back maps
\[
H^\bullet(G_\CM/B) \twoheadrightarrow H^\bullet(X_w) \to H^\bullet(BS(s_1, \dots, s_m))
\]
and $IH^\bullet(X_w)$ is even a summand of $H^\bullet(BS(s_1, \dots,
s_m))$ as an $H^\bullet(G_\CM/B)$-module. (The surjectivity of the
restriction map $H^\bullet(G_\CM/B) \twoheadrightarrow H^\bullet(X_w)$
follows because both spaces have compatible cell-decompositions.)
Remarkably, this algebraic
structure already determines the summand $IH^\bullet(X_w)$ (see
\cite[Erweiterungssatz]{So}): 

\begin{thm}[Soergel]\label{thm:s1}
Let $w = s_1 \dots s_m$
  denote a reduced expression for $w$ as above.
Consider $H^\bullet(BS(s_1, \dots, s_m))$ as a
  $H^\bullet(G_\CM/B)$-module. Then $IH(X_w)$ may be described as the
  indecomposable graded $H^\bullet(G_\CM/B)$-module direct summand
  with non-trivial degree zero part.
\end{thm}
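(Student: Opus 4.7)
The plan is to combine the decomposition theorem, applied to the Bott-Samelson resolution, with an indecomposability result for the intersection cohomology of Schubert varieties as a module over $H^\bullet(G_\CM/B)$. First apply the decomposition theorem to $f : BS(s_1, \dots, s_m) \to X_w$. Since $BS$ is smooth and $f$ is proper and birational, one obtains
\[
Rf_* \underline{\RM}_{BS}[\ell(w)] \cong IC(X_w) \oplus \bigoplus_i IC(X_{v_i})[k_i]
\]
with $v_i < w$ in the Bruhat order and $k_i \in \ZM$; the fact that $IC(X_w)$ appears with multiplicity one and shift zero, and with no other $IC(X_w)[k]$ summand, is forced by birationality through an inspection of the stalk at a generic point of $X_w$. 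Taking hypercohomology and using that the pullback $H^\bullet(G_\CM/B) \to H^\bullet(BS)$ factors through $H^\bullet(X_w)$, this refines to a decomposition of graded $H^\bullet(G_\CM/B)$-modules
\[
H^\bullet(BS(s_1, \dots, s_m)) \cong IH^\bullet(X_w) \oplus \bigoplus_i IH^\bullet(X_{v_i})[k_i].
\]

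Because $BS(s_1, \dots, s_m)$ is connected, $H^0(BS) = \RM$ is one-dimensional, and this single copy of $\RM$ is already accounted for by the fundamental class inside $IH^\bullet(X_w)$. Hence every other summand $IH^\bullet(X_{v_i})[k_i]$ has vanishing degree zero part, and $IH^\bullet(X_w)$ is the unique summand in this particular decomposition with non-trivial degree zero part.

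The remaining task is to show that $IH^\bullet(X_w)$ is itself indecomposable as a graded $H^\bullet(G_\CM/B)$-module, which together with graded Krull-Schmidt upgrades ``unique summand in this decomposition'' to the absolute characterization in the theorem. The strategy is to establish that $\mathrm{End}^0(IH^\bullet(X_w)) = \RM$, whence no nontrivial idempotents exist. The natural route is to pass to $T$-equivariant cohomology: using equivariant formality of the flag variety and its Schubert subvarieties, $IH^\bullet_T(X_w)$ is a torsion-free $H^\bullet_T(\mathrm{pt})$-module that localizes to $T$-fixed points in a highly rigid way, and degree zero equivariant endomorphisms are identified with degree zero endomorphisms of the simple perverse sheaf $IC(X_w)$ in the constructible derived category, which are $\RM$ by simplicity. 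The main obstacle lies precisely here: simplicity of $IC(X_w)$ as a perverse sheaf does not automatically imply indecomposability of its hypercohomology as an $H^\bullet(G_\CM/B)$-module, and the argument crucially exploits equivariant formality and parity vanishing properties special to the flag variety --- this is the technical core of Soergel's \emph{Erweiterungssatz}.
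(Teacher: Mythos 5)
The paper does not actually prove Theorem~\ref{thm:s1}; it cites Soergel's \emph{Erweiterungssatz} from \cite{So}, so your argument has to stand on its own. The reduction you carry out is correct: the decomposition theorem applied to the Bott--Samelson resolution, together with the generic-stalk computation, gives $IC(X_w)$ exactly once and with shift zero; passing to hypercohomology yields the stated decomposition of graded $H^\bullet(G_\CM/B)$-modules; the count $\dim H^0(BS) = 1 = \dim IH^0(X_w)$ forces every other summand to vanish in degree zero; and graded Krull--Schmidt (which applies because degree-zero endomorphism algebras are finite-dimensional over $\RM$) would then propagate uniqueness to any decomposition into indecomposables, \emph{provided} $IH^\bullet(X_w)$ is itself indecomposable.

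That proviso is exactly where the theorem's content lives, and it is left as a gap. You correctly flag that simplicity of $IC(X_w)$ as a perverse sheaf does not automatically give that the degree-zero endomorphism ring of $IH^\bullet(X_w)$ over $H^\bullet(G_\CM/B)$ is $\RM$: one needs hypercohomology to be full on morphisms between these objects, which is precisely the Erweiterungssatz, and nothing in your sketch of equivariant formality and localization actually establishes it. (Soergel's own argument does not go through equivariant cohomology at all: it identifies $IH^\bullet(X_w)$ with $\mathbb{V}P_w$ for an indecomposable projective $P_w$ in the principal block of category $\mathcal{O}$, where $\mathbb{V} = \operatorname{Hom}(P_{w_0}, -)$, and the hard input is the \emph{Struktursatz} that $\mathbb{V}$ is fully faithful on projectives.) So the easy half of the theorem---uniqueness given indecomposability---is handled cleanly, but the indecomposability itself, which is the whole point, is acknowledged and then skipped.
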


A word of caution: The realization of $IH^\bullet(X_w)$ inside
$H^\bullet(BS(s_1, \dots, s_m))$ is not canonical in
general. We can certainly decompose $H^\bullet(BS(s_1, \dots, s_m))$ into graded
indecomposable $H^\bullet(G_\CM/B)$-modules. Although this
decomposition is not canonical, the Krull-Schmidt
theorem ensures that the isomorphism type and multiplicities of
indecomposable summands do not depend on the chosen decomposition. The
above theorem states that, for any such decomposition, the unique
indecomposable module with non-trivial degree zero part is isomorphic to
$IH^\bullet(X_w)$ (as an $H^\bullet(G_\CM/B)$-module).

We now explain (following Soergel) how one may give an algebraic
description of all players in the above theorem. Recall that $R = S((\Lie T)^*)$ denotes the symmetric algebra on the
dual of $\Lie T$, graded so that $(\Lie T)^*$ has degree 2. The
Weyl group $W$ acts on $R$, and for any simple reflection $s \in S$ we
denote by $R^s$ the invariants under $s$. It is not difficult to see
that $R$ is a free graded module of rank $2$ over $R^s$ with basis
$\{1,\alpha_s\}$, where $\alpha_s$ is the simple root associated to $s
\in S$. (In essence this is the high-school fact that any polynomial
can be written as the sum of its even and odd parts.)

The starting point is the following observation:

\begin{prop}[Soergel] \label{bs}
One has an isomorphism of graded algebras
\[
H^\bullet(BS(s_1, \dots, s_m)) = R \otimes_{R^{s_1}} R \otimes_{R^{s_2}} \dots
R \otimes_{R^{s_m}} R \otimes_R\RM
\]
where the final term is an $R$-algebra via $\RM \cong R/R^{>0}$.
\end{prop}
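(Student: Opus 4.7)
The plan is to first establish the $T$-equivariant statement
\[
H_T^\bullet(BS(s_1,\dots,s_m)) \;\cong\; R \otimes_{R^{s_1}} R \otimes_{R^{s_2}} \cdots \otimes_{R^{s_m}} R
\]
(as graded algebras, with appropriate $R$-bimodule structure) and then specialize to ordinary cohomology by setting the equivariant parameters to zero. The right-hand side of the proposition is exactly this equivariant formula tensored with $\RM = R/R^{>0}$ over the copy of $R$ carrying the equivariance.

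For the \emph{base case} $m=1$, $BS(s) = P_s/B \cong \PM^1$. The torus $T$ acts on $\PM^1$ with two fixed points $B/B$ and $sB/B$ whose tangent weights differ by $\alpha_s$, and equivariant localization identifies $H_T^\bullet(\PM^1)$ with the subring
\[
\{(f,g)\in R\oplus R : f\equiv g\pmod{\alpha_s}\} \;\subset\; R\oplus R.
\]
This is canonically isomorphic as a graded algebra to $R\otimes_{R^s} R$ via $a\otimes b \mapsto (ab,\,s(a)b)$, using that $R = R^s\oplus R^s\alpha_s$ as an $R^s$-module. A direct check shows that the equivariant $R$-algebra structure corresponds to the diagonal $r\mapsto (r,r) = 1\otimes r$, i.e.\ to the rightmost $R$-slot.

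For the \emph{inductive step}, the forgetful map
\[
\pi : BS(s_1,\dots,s_m) = (P_1\times^B\cdots\times^B P_{m-1})\times^B (P_m/B) \;\longrightarrow\; BS(s_1,\dots,s_{m-1})
\]
realizes $BS(s_1,\dots,s_m)$ as a $T$-equivariant Zariski-locally-trivial $\PM^1$-bundle (the associated bundle construction). Equivariant Leray-Hirsch applies, since $H_T^\bullet(\PM^1)$ is free of rank $2$ over $R$ and the relative hyperplane class (e.g.\ the first Chern class of the tautological line bundle on the last step) lifts globally. It yields an algebra isomorphism
\[
H_T^\bullet(BS(s_1,\dots,s_m)) \;\cong\; H_T^\bullet(BS(s_1,\dots,s_{m-1}))\otimes_R H_T^\bullet(P_m/B),
\]
and combining with the induction hypothesis and the base case produces the equivariant formula. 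To pass to ordinary cohomology, observe that $BS(s_1,\dots,s_m)$ is paved by affine cells indexed by subwords of $(s_1,\dots,s_m)$ (at each step one chooses to stay in $B$ or cross into $Bs_iB$), all of even real dimension. Hence $H^\bullet(BS)$ is concentrated in even degrees, $H_T^\bullet(BS)$ is free over $R$, and the Borel-construction Serre spectral sequence $ET\times_T BS\to BT$ degenerates, giving $H^\bullet(BS) = H_T^\bullet(BS)\otimes_R \RM$. Specializing the equivariant formula yields the proposition.

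The \emph{main obstacle} is the precise bookkeeping of $R$-bimodule structures. Each factor $R\otimes_{R^{s_k}} R$ admits two natural $R$-algebra structures (one from each side), and one must carefully identify at each stage which is the equivariance, so that the $\otimes_R$ coming from Leray-Hirsch concatenates factors to produce the iterated tensor product as written, and so that the final $\otimes_R \RM$ kills the intended copy of $R$. Once the fixed-point localization calculation in the base case has pinned down the conventions, the inductive bookkeeping is forced. Everything else -- equivariant localization on $\PM^1$, equivariant Leray-Hirsch for a $\PM^1$-bundle, and the even-cell paving of Bott-Samelson varieties -- is classical.
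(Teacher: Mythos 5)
The paper does not itself prove this proposition: it is stated as a result of Soergel with a citation but no argument, and the text that follows only spells out the $m=1$ case and the resulting monomial basis. So there is no internal proof to measure your proposal against.

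Your strategy --- compute $H_T^\bullet$ by induction along the tower of $\PM^1$-bundles, pin down the base case by fixed-point localization, and then specialize at $R^{>0}$ using the even affine paving to kill the equivariant parameters --- is the standard modern route and is sound in outline. The one step that is not airtight as written is the claim that equivariant Leray--Hirsch yields an \emph{algebra} isomorphism $H_T^\bullet(BS(s_1,\dots,s_m)) \cong H_T^\bullet(BS(s_1,\dots,s_{m-1})) \otimes_R H_T^\bullet(P_m/B)$. Leray--Hirsch by itself produces a module isomorphism; for the multiplicative structure you would need to know that the monic quadratic relation your lifted class $\xi$ satisfies over the image of $H_T^\bullet(BS(s_1,\dots,s_{m-1}))$ has coefficients pulled back from $R=H_T^\bullet(\mathrm{pt})$, and moreover that the resulting $\otimes_R$ splices the rightmost slot of $H_T^\bullet(BS(s_1,\dots,s_{m-1}))$ onto the \emph{left} slot of $R\otimes_{R^{s_m}}R$ --- not onto the equivariant (right) slot you fixed in the base case. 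Neither point is automatic, and your ``the bookkeeping is forced'' remark is precisely where the real content of the inductive step lies. A cleaner way to secure the ring isomorphism is to exhibit $BS(s_1,\dots,s_m)$ as the fiber product $BS(s_1,\dots,s_{m-1}) \times_{G_\CM/P_{s_m}} G_\CM/B$ and apply equivariant K\"unneth/base change to obtain $H_T^\bullet(BS) \cong H_T^\bullet(BS') \otimes_{H_T^\bullet(G_\CM/P_{s_m})} H_T^\bullet(G_\CM/B)$ as algebras; then the equivariant Borel descriptions $H_T^\bullet(G_\CM/B) \cong R\otimes_{R^W}R$ and $H_T^\bullet(G_\CM/P_{s_m}) \cong R^{s_m}\otimes_{R^W}R$ collapse this to $H_T^\bullet(BS') \otimes_{R^{s_m}} R$ with the slot identifications unambiguous. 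Your final degeneration and specialization step is correct as stated.
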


For example, for any $s \in S$ we have $BS(s) = P_s/B \cong \PM^1$ and
$R \otimes_{R^s}R \otimes_R \RM = R \otimes_{R^s} \RM$ is
2-dimensional, with graded basis $\{ 1 \otimes 1,
\alpha_s \otimes 1 \}$ of degrees $0$ and $2$. More generally, one can
show that
\[
R \otimes_{R^{s_1}} R \otimes_{R^{s_2}} \dots
R \otimes_{R^{s_m}} R \otimes_R\RM = R \otimes_{R^{s_1}} R \otimes_{R^{s_2}} \dots
R \otimes_{R^{s_m}} \RM
\]
has graded basis $\alpha_{s_1}^{\e_1} \otimes \alpha_{s_2}^{\e_2}
\otimes \dots \otimes \alpha_{s_m}^{\e_m} \otimes 1$ where $(\e_a)_{a =
  1}^m$ is any tuple of zeroes and ones. In particular, its Poincar\'e
polynomial is $(1+q^2)^m$.

Recall that in the introduction we described the Borel isomorphism:
\[
H^\bullet(G/B) \cong R / (R^W_+).
\]
Notice that left multiplication by any invariant polynomial of
positive degree acts as zero on
\[
H^\bullet(BS(s_1, \dots, s_m)) = R \otimes_{R^{s_1}} R \otimes_{R^{s_2}} \dots
R \otimes_{R^{s_m}} R \otimes_R\RM.
\]
We conclude that $R \otimes_{R^{s_1}} \dots
R \otimes_{R^{s_m}} R \otimes_R\RM$ is a module over $R
/ (R^W_+)$. Geometrically, this corresponds to the the pull-back map
on cohomology 
\[
H^\bullet(G_\CM/B) \to H^\bullet(BS(s_1, \dots, s_m)) 
\]
discussed above.

We can now reformulate Theorem \ref{thm:s1} algebraically as follows:

\begin{thm}[Soergel \cite{So}] \label{thm:s2}
Let $D_w$ be any indecomposable $R/ (R^W_+)$-module direct
  summand of 
\[
H^\bullet(BS(s_1, \dots, s_m)) = R \otimes_{R^{s_1}} R \otimes_{R^{s_2}} \dots
R \otimes_{R^{s_m}} R \otimes_R\RM
\]
containing the element $1 \otimes 1 \otimes \dots \otimes 1$, where $w
= s_1 \dots s_m$ is a reduced expression for $w$. Then
$D_w$ is well-defined up to isomorphism (i.e. does not depend on the choice of reduced
expression) and $D_w \cong IH^\bullet(X_w)$.
\end{thm}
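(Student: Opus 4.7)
The plan is to deduce Theorem \ref{thm:s2} directly from Theorem \ref{thm:s1} by matching algebraic and geometric data. The geometric content is all in Theorem \ref{thm:s1}; what remains is to verify that ``non-trivial degree zero part'' and ``contains $1\otimes 1\otimes\cdots\otimes 1$'' cut out the same indecomposable summand, and that this summand is well-defined up to isomorphism.

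First, I would identify the module categories. Proposition \ref{bs} gives a graded algebra isomorphism
\[
H^\bullet(BS(s_1,\dots,s_m)) \cong R\otimes_{R^{s_1}} R\otimes_{R^{s_2}}\cdots\otimes_{R^{s_m}} R\otimes_R \RM,
\]
and the Borel isomorphism identifies $H^\bullet(G_\CM/B)$ with $R/(R^W_+)$; the pull-back map from the introduction, which factors through multiplication by $R^W_+$-invariants as zero, realizes the right-hand side as an $R/(R^W_+)$-module in a way compatible with the $H^\bullet(G_\CM/B)$-module structure on the left. Thus any graded decomposition as an $R/(R^W_+)$-module is exactly the same thing as a graded decomposition as an $H^\bullet(G_\CM/B)$-module.

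Second, I would pin down the degree zero part. From the explicit graded basis $\alpha_{s_1}^{\e_1}\otimes\cdots\otimes\alpha_{s_m}^{\e_m}\otimes 1$ with $\e_a\in\{0,1\}$, the Poincar\'e polynomial is $(1+q^2)^m$, so the degree zero part is one-dimensional and spanned by $1\otimes 1\otimes\cdots\otimes 1$. Since $H^\bullet(BS(s_1,\dots,s_m))$ is finite-dimensional, the graded Krull--Schmidt theorem applies: any two decompositions into graded indecomposable $R/(R^W_+)$-modules agree up to isomorphism and reordering of summands. Because any such decomposition respects the grading, the one-dimensional degree zero subspace must lie entirely in a single summand; that summand is therefore the unique indecomposable summand with non-trivial degree zero part, and it is also the unique summand containing $1\otimes 1\otimes\cdots\otimes 1$. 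Hence $D_w$ is well-defined up to isomorphism.

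Finally, I would invoke Theorem \ref{thm:s1}, which identifies precisely this distinguished summand with $IH^\bullet(X_w)$ as an $H^\bullet(G_\CM/B)$-module, giving $D_w\cong IH^\bullet(X_w)$. The main obstacle is of course not in this translation but upstream, in the geometric Theorem \ref{thm:s1}: this is where the decomposition theorem, the injectivity of $H^\bullet(X_w)\hookrightarrow H^\bullet(BS(s_1,\dots,s_m))$, and Soergel's \emph{Erweiterungssatz} are used to show that $IH^\bullet(X_w)$ really is an indecomposable $H^\bullet(G_\CM/B)$-module direct summand with $IH^0(X_w)=\RM\neq 0$. Once those geometric inputs are in hand, the argument above is essentially a linear-algebraic unpacking via Proposition \ref{bs} and the Borel isomorphism.
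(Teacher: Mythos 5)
Your argument is exactly the reformulation the paper has in mind: it presents Theorem \ref{thm:s2} as an algebraic translation of Theorem \ref{thm:s1} via Proposition \ref{bs} and the Borel isomorphism, with the one-dimensionality of the degree-zero part (so ``contains $1\otimes\cdots\otimes1$'' matches ``non-trivial degree zero part'') and graded Krull--Schmidt supplying the well-definedness, which you spell out correctly. Your write-up simply makes explicit the dictionary the paper leaves implicit.
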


The modules $\{ D_w \; | \; w \in W \}$ are the (indecomposable)
\emph{Soergel modules}.

\begin{ex}
  We consider the case of $G = GL_3(\CM)$ in which case
\[
W = S_3 = \{
  \id, s_1, s_2, s_1s_2, s_2s_1, s_1s_2s_1 \}
\]
(we use the conventions of Example \ref{ex:bs}). In this case it turns
out that all Schubert varieties are smooth. Also, if $\ell(w) \le 2$
then any Bott-Samelson resolution is an isomorphism. We conclude
\begin{align*}
D_{\id}  = \RM  &\\
D_{s_1} = H^\bullet(BS(s_1)) = R \otimes_{R^{s_1}} \RM \quad & \quad 
D_{s_2} = R \otimes_{R^{s_2}} \RM
\\
D_{s_1s_2} = H^\bullet(BS(s_1, s_2)) = R \otimes_{R^{s_1}} R \otimes_{R^{s_2}}
\RM \quad & \quad D_{s_2s_1} = R \otimes_{R^{s_2}} R \otimes_{R^{s_1}}
\RM
\end{align*}
(A pleasant exercise for the reader is to verify that in all these
examples above $D_x$ is a cyclic (hence indecomposable) module over $R$. This is
not usually the case, and is related to the (rational) smoothness of the Schubert varieties in
question.)

The element $w_0 = s_1s_2s_1$ is more interesting. In this case the
Bott-Samelson resolution
\[
BS(s_1, s_2, s_1)  \to X_{w_0} = G/B
\]
is not an isomorphism. As previously discussed, the Poincar\'e
polynomial of
\begin{equation} \label{eq:gl3bs}
H^\bullet(BS(s_1, s_2, s_1) ) = R \otimes_{R^{s_1}} R \otimes_{R^{s_2}} R \otimes_{R^{s_1}}
\RM
\end{equation}
is $(1+q^2)^3$ whereas the Poincar\'e polynomial of
\begin{equation} \label{eq:gl3c}
IH^\bullet(X_{w_0}) = H^\bullet(G/B) = R/(R^W_+)
\end{equation}
is $(1+q^2)(1+q^2+q^4)$. In this case the reader may verify that
\eqref{eq:gl3c} is a summand of \eqref{eq:gl3bs}. In fact one has an
isomorphism of graded $R/(R^W_+)$-modules:
\[
R \otimes_{R^{s_1}} R \otimes_{R^{s_2}} R \otimes_{R^{s_1}}
\RM
= R/(R^W_+) \oplus (R\otimes_{R^s} \RM(-2)).
\]
Here $R\otimes_{R^s} \RM(-2)$ denotes the shift of $R\otimes_{R^s}
\RM$ in the grading such that its generator $1 \otimes 1$ occurs in
degree 2. This extra summand can be embedded into \eqref{eq:gl3bs} via
the map which sends \[f \otimes 1 \mapsto f \otimes \alpha_{s_2}
\otimes 1 \otimes 1 + f \otimes 1 \otimes \alpha_{s_2} \otimes 1\] for
$f \in R$.
\end{ex}

\begin{ex} \label{ex:w0}
If $w_0$ denotes the longest element of $W$ then $X_{w_0} =
G_{\CM}/B$, the (smooth) flag variety of $G$. In particular
\[
IH^\bullet(X_{w_0}) = H^\bullet(G_{\CM}/B) = R/(R^W_+)
\]
by the Borel isomorphism. Theorem \ref{thm:s2} asserts
that $R/(R^W_+)$ occurs as a direct summand of
\[
R \otimes_{R^{s_1}} R \otimes_{R^{s_2}} \otimes \dots
\otimes_{R^{s_m}} \RM
\]
for any reduced expression $w_0 = s_1\dots s_m$. This is by no means obvious! We have seen an
instance of this in the previous example.
\end{ex}

\begin{rmk} In this section we could have worked in the category of graded $R$-modules, rather than the category of graded $R/(R^W_+)$-modules, and it would change nothing. All the
$R$-modules in question will factor through $R/(R^W_+)$. In the next section, we will work with $R$-modules instead. \end{rmk}

We now discuss hard Lefschetz and the Hodge-Riemann bilinear
relations. Recall that our Borel subgroup $B \subset
G_\CM$ determines a set of simple roots $\Delta \subset \Phi
\subset (\Lie T)^*$ and simple coroots $\Delta^\vee \subset \Phi^\vee
\subset \Lie T$. Under the isomorphism
\[
H^2(G_\CM/B) \cong (\Lie T)^*
\]
the ample cone (i.e. the $\RM_{> 0}$-stable subset of $H^2(G_\CM/B)$ generated
by Chern classes of ample line bundles on $G_\CM/B$) is the cone of dominant
weights for $\Lie T$:
\begin{equation} \label{eq:ample cone}
(\Lie T)^*_+ := \{ \lambda \in (\Lie T)^* \; | \; \langle \lambda, \alpha^\vee \rangle
> 0 \text{ for all } \alpha^\vee \in \Delta^\vee \}.
\end{equation}
The hard Lefschetz theorem then asserts that left multiplication by
any $\lambda \in (\Lie T)^*_+$ satisfies the hard Lefschetz theorem on $D_w =
IH^\bullet(X_w)$. That is, for all $i \ge 0$, multiplication by
$\lambda^i$ induces an isomorphism
\[
\lambda^i : D_w^{\ell(w) -i} \simto D_w^{\ell(w) + i}.
\]

To discuss the Hodge-Riemann relations we need to make the Poincar\'e
pairing $\langle -, - \rangle_{\Poinc}$ explicit for $D_w$. We first
discuss the Poincar\'e form on
$H^\bullet(BS(s_1, \dots, s_m))$. Recall that for any oriented manifold $M$
the Poincar\'e form in de Rham cohomology is given by
\[
\langle \a, \b \rangle = \int_M \alpha \wedge \b.
\]
We imitate this algebraically as follows. By the discussion after
Proposition \ref{bs},
the degree $2m$ component of
\[
H^\bullet(BS(s_1, \dots, s_m)) = R \otimes_{R^{s_1}} R \otimes_{R^{s_2}} \dots
R \otimes_{R^{s_m}} \RM
\]
is one-dimensional and is spanned by the vector $c_{\top} := \alpha_{s_1} \otimes
\alpha_{s_2} \otimes \dots
\otimes \alpha_{s_m} \otimes 1$. We can define a bilinear form
$\langle -, - \rangle$ on $R \otimes_{R^{s_1}} R \otimes_{R^{s_2}} \dots
R \otimes_{R^{s_m}} \RM$ via
\[
\langle f, g \rangle = \Tr(fg)
\]
where $fg$ denotes the term-wise multiplication, and $\Tr$ is the
functional which returns the coefficient of $c_{\top}$. Then $\langle
-, - \rangle$ is a non-degenerate symmetric form which agrees up 
to a positive scalar with the intersection form on $H^\bullet(BS(s_1,
\dots, s_m))$. 

Now recall that $D_w$ is obtained as summand of $R \otimes_{R^{s_1}} R \otimes_{R^{s_2}} \dots
R \otimes_{R^{s_m}} \RM$, for a reduced expression of $w$. Fixing such an inclusion we obtain a form on
$D_w$ via restriction of the form $\langle -, - \rangle$. In fact,
this form is well-defined (i.e. depends neither on the choice of
reduced expression nor embedding) up to a positive scalar. One can
show that this form agrees with the Poincar\'e pairing on $D_w =
IH^\bullet(X_w)$ up to a positive scalar. The Hodge-Riemann bilinear
relations then hold for $D_w$ with respect to this form and left
multiplication by any $\lambda \in (\Lie T)^*_+$.

\section{Soergel modules for arbitrary Coxeter systems} \label{arbitrary}

Now let $(W,S)$ denote an arbitrary Coxeter system. That is, $W$ is a
group with a distinguished set of generators $S$ and a presentation
\[
W = \langle s \in S \; | \; (st)^{m_{st}} = \id \rangle
\]
such that $m_{ss} = 1$
and $m_{st} = m_{ts} \in \{ 2, 3, 4, \dots, \infty \}$ for all $s
\ne t$. (We interpret $(st)^{\infty} = \id$ as there being no
relation). As we discussed above, the Weyl groups of compact Lie
groups are Coxeter groups. In the 1930's Coxeter proved that the
finite reflection groups are exactly the finite Coxeter groups, and
achieved in this way a classification. As well as the finite
reflection groups arising in Lie theory (of types $A$, \dots, $G$) one
has the symmetries of the regular $n$-gon (a dihedral group of type
$I_2(n)$) for $n \ne 3, 4, 6$, the symmetries of the icosahedron (a group of type
$H_3$) and the symmetries of a regular polytope in $\RM^4$ with 600
sides (a group of type $H_4$).

It was realized later (by Coxeter, Tits, \dots) that Coxeter groups form an interesting
class of groups whether or not they are finite. They encompass groups
generated by affine reflections in euclidean space (affine Weyl
groups), certain hyperbolic reflection groups etc. One can treat these
groups in a uniform way thanks to the existence of their \emph{geometric
representation}. Let $\hg = \bigoplus_{s \in S} \RM \alpha_s^\vee$ for formal symbols $\alpha_s^\vee$,
and define a form on $\hg$ via
\[
(\alpha_s^\vee, \alpha_t^\vee) = -\cos(\pi/m_{st}).
\]
Although this form is positive definite if and
only if $W$ is finite, one can still imagine  that each $\alpha_s^\vee$ has length 1 and the angle between
$\alpha_s^\vee$ and $\alpha_t^\vee$ for $s \ne t$ is
$(m_{st}-1)\pi/m_{st}$. 
It is not difficult to verify (see \cite[V.4.1]{Bo} or \cite[5.3]{Hu}) that the
assignment
\[
s(v) := v - 2(v,\alpha_s^\vee)\alpha_s^\vee
\]
defines a representation of $W$ on $\hg$. In fact it is faithful
(\cite[V.4.4.2]{Bo} or
\cite[Corollary 5.4]{Hu})

If $W$ happens to be the Weyl group of our $T \subset G$ from the
introduction then (by rescaling the
coroots so that they all have length 1 with respect to a $W$-invariant
form) one may construct a $W$-equivariant isomorphism
\[
\Lie T \cong \hg.
\]
Hence one can think of this setup as providing the action of $W$
on the Lie algebra of a maximal torus, even though the corresponding
Lie group might not exist!

The main point of the previous section is that one may describe the
intersection cohomology, Poincar\'e pairing and ample cone entirely
algebraically, using only $\hg$, its basis and its $W$-action. That is, let us
(re)define $R = S(\hg^*)$ to be the symmetric algebra on $\hg^*$
(alias the regular functions on $\hg$), graded with $\deg \hg^* =
2$. Then $W$ acts on $R$ via graded algebra automorphisms. Imitating
the constructions of the previous section one obtains graded $R$-modules
$D_w$ (well-defined up to isomorphism), the only difference being that
we work in the category of $R$-modules rather than $R/(R^W_+)$-modules.\footnote{Although all the $R$-modules will factor through $R/(R^W_+)$, we prefer the ring $R$ for philosophical reasons. When $W$ is infinite, the ring $R/(R^W_+)$
is infinite-dimensional, as $R^W$ has the ``wrong'' transcendence
degree, and the Chevalley theorem does not hold. The ring $R$ behaves
in a uniform way for all Coxeter groups, while the quotient ring
$R/(R^W_+)$ does not. } 
We call the modules $D_w$ the (indecomposable) \emph{Soergel modules}.
As in the Weyl group case,
the modules $D_w$ are finite dimensional
over $\RM$ and are equipped with
non-degenerate ``Poincar\'e pairings'':
\[
\langle -, - \rangle : D_w^i \times D_w^{2\ell(w) - i} \to \RM.
\]

Our main theorem is that these modules $D_w$ ``look
like the intersection cohomology of a Schubert variety''. Consider the ``ample cone'':
\[
\hg^*_+ := \{ \lambda \in \hg^* \; | \; \langle \lambda, \alpha_s^\vee
\rangle > 0 \text{ for all } s \in S \}.
\]

\begin{thm}[\cite{EW}] \label{thm:ew}
For any $w \in W$, let $D_w$ be as above.
  \begin{enumerate}
  \item (Hard Lefschetz theorem) For any $i \le \ell(w)$, left
    multiplication by $\lambda^i$ for any $\l \in \hg^*_+$ gives an isomorphism
\[
\lambda^{\ell(w) -i} : D_w^{i} \simto D_w^{2\ell(w) - i}
\]
\item (Hodge-Riemann bilinear relations) For any $i \le \ell(w)$ and
  $\lambda \in \hg^*_+$ the restriction of the form
\[
(f, g) := \langle f, \lambda^{\ell(w)-i} g \rangle
\]
on $D_w^{i}$ to $P^{i} = \ker \lambda^{\ell(w)-i+1}
\subset D_w^{i}$ is $(-1)^{i/2}$-definite.
  \end{enumerate}
\end{thm}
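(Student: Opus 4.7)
The plan is to prove both statements by induction on $\ell(w)$, following the de Cataldo--Migliorini strategy of deducing hard Lefschetz and Hodge--Riemann from the decomposition theorem, but replacing their geometric inputs with algebraic substitutes. It is essential to strengthen the statement in two ways: first, to pass to Soergel \emph{bimodules} $B_w$ over $R \otimes R$ (from which the modules $D_w$ are recovered by tensoring with $\RM$ on the right), so that one has a monoidal category with tensor product $\otimes_R$; second, to carry the induction not only for indecomposables $B_w$ but also for all Bott--Samelson bimodules $BS(\underline{w}) = B_{s_1} \otimes_R \cdots \otimes_R B_{s_m}$ where $B_s = R \otimes_{R^s} R$.

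For the inductive step, write a reduced expression as $\underline{w} = \underline{v}\cdot s$ with $\ell(v) = \ell(w) - 1$. The bimodule $BS(\underline{w}) = BS(\underline{v}) \otimes_R B_s$ splits as a left $R$-module into $BS(\underline{v}) \oplus BS(\underline{v})(-2)$ via the basis $\{1\otimes 1, \a_s \otimes 1\}$ of $B_s$; this is the algebraic avatar of the cohomology of a $\PM^1$-bundle. Natural maps mimicking pullback and pushforward between $BS(\underline{v})$ and $BS(\underline{w})$ allow an inductive transfer of Hodge-theoretic data.

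The heart of the argument has two halves. The first is a \emph{limit argument}: for a path $\l_t$ in the closure of $\hg^*_+$, the signature of the Lefschetz form on primitives depends continuously on $t$ and can only jump when (HL) fails. Choosing a degenerate boundary class for which (HR) reduces to lower-length data, (HR) then propagates throughout $\hg^*_+$ wherever (HL) holds. Hence the task is reduced to establishing (HL). The second half is a \emph{hard Lefschetz inductive step}: under the decomposition above, left multiplication by $\l \in \hg^*_+$ on $BS(\underline{w})$ corresponds to left multiplication by $\l$ on $BS(\underline{v})$ together with a mixing term proportional to $\langle \l, \a_s^\vee\rangle$. Combining (HR) for $BS(\underline{v})$ (inductively available for any $\l \in \hg^*_+$) with the strict positivity $\langle \l, \a_s^\vee\rangle > 0$ yields (HL) and (HR) for $BS(\underline{w})$.

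The main obstacle is extracting (HL) and (HR) for the indecomposable summand $B_w$ from their validity on $BS(\underline{w})$. A summand of a module satisfying hard Lefschetz need not itself satisfy hard Lefschetz, unless one controls the signature of the Lefschetz form on each summand separately. The needed control comes from the inductive hypothesis: the non-$B_w$ summands of $BS(\underline{w})$ are of the form $B_x(k)$ with $\ell(x) < \ell(w)$, so by induction their signatures on primitives are known. Subtracting these from the total signature on $BS(\underline{w})$ identifies the signature on $B_w$, yielding (HL) and (HR) for $B_w$. Making this subtraction rigorous depends on Soergel's Hom formula, which controls the graded multiplicities of summands in Bott--Samelson bimodules and thus pins down the decomposition up to a scalar; this Hom formula, together with careful grading bookkeeping in the limit argument, is where the bulk of the technical work concentrates.
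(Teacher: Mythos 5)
Note first that the paper does not actually prove Theorem~\ref{thm:ew}: it is a survey, and the result is cited from \cite{EW} (with only a worked special case appearing in \S\ref{dihedral}). Your sketch does capture the skeleton of the argument in \cite{EW}: induction on length, passage to Soergel and Bott--Samelson bimodules, a de Cataldo--Migliorini-style deformation of the Lefschetz operator to propagate Hodge--Riemann once hard Lefschetz is secured, a positivity-driven step linking (HR) for $B_v$ to (HL) for $B_vB_s$, and Soergel's Hom formula to control multiplicities.

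Two central ideas are nonetheless missing or glossed over. First, as the paper's own remark after the theorem makes explicit, Theorem~\ref{thm:ew} is proved \emph{simultaneously} with Soergel's conjecture in a single intertwined induction. Your signature-subtraction step requires knowing the graded multiplicities of each $B_x(k)$ inside $BS(\underline{w})$; the Hom formula reduces this to knowing the characters of $B_x$ for $x<w$, but determining those characters \emph{is} Soergel's conjecture, not a known input. In \cite{EW} the implication runs both ways: $HR(<w)$ is what allows one to prove $S(w)$, because the sign constraints from the Hodge--Riemann relations rule out ``bad'' summands. Without carrying $S(<w)$ in the inductive hypothesis, your subtraction cannot even be set up. Second, the assertion that ``(HR) for $BS(\underline{v})$ plus positivity of $\langle\lambda,\alpha_s^\vee\rangle$ yields (HL) and (HR) for $BS(\underline{w})$'' is really the entire technical heart, not a step to be waved past. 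The Lefschetz operator on $BS(\underline{v})\otimes_R B_s \cong BS(\underline{v})\oplus BS(\underline{v})(-2)$ is only upper-triangular with respect to that left-module splitting, so invertibility is not read off the diagonal; one needs a genuine Lefschetz-linear-algebra argument (the ``weak Lefschetz substitute'' and factoring arguments of \cite{EW}) to see that no unexpected kernel appears. Relatedly, the subtraction argument tacitly assumes the Lefschetz form is block-diagonal with respect to the decomposition of $BS(\underline{w})$ into indecomposables; this perpendicularity is not automatic and must itself be extracted from (HR) on lower pieces.
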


Some remarks:
\begin{enumerate}
\item The graded modules $D_w$ are zero in odd-degree (as is immediate
  from their definition as a summand of $R \otimes_{R^{s_1}} \dots
  \otimes_{R^{s_m}} \RM$) and so the sign $(-1)^{i/2}$ makes sense.
\item The motivation behind establishing the above theorem is a
  conjecture made by Soergel in \cite[Vermutung 1.13]{Soe3}. In fact, the above
  theorem forms part of a complicated inductive proof of Soergel's conjecture.
Soergel was led to his
  conjecture as an algebraic means of understanding the Kazhdan-Lusztig
  basis of the Hecke algebra and the Kazhdan-Lusztig conjecture on
  characters of simple highest weight modules over complex semi-simple
  Lie algebras. The definition of the Kazhdan-Lusztig basis and the
  statement of the Kazhdan-Lusztig conjecture is ``elementary''
but, prior to the above results, needed powerful tools
  from algebraic geometry (e.g. Deligne's proof of the Weil
  conjectures) for its resolution. Because of this reliance on
  algebraic geometry, these methods break down for arbitrary Coxeter
  systems, for which no flag variety exists. In some sense the above
  theorem is interesting because it provides a ``geometry'' for
  Kazhdan-Lusztig theory for Coxeter groups which do not come from
  Lie groups or generalizations (affine, Kac-Moody, \dots) thereof. This was Soergel's aim in formulating his conjecture.
\item Our proof is inspired by the beautiful work of de Cataldo and
  Migliorini \cite{dCM,dCM2}, which
  proves the decomposition theorem using only classical Hodge
  theory.
\item The idea of considering the ``intersection cohomology'' of a
  Schubert variety associated to any element in a Coxeter group has
  also been pursued by Dyer \cite{Dyer1, Dyer} and Fiebig
  \cite{Fiebig}. There is also a
closely related theory non-rational polytopes (where the associated
toric variety is missing) \cite{BL,Karu,BKBF}.
\item In Example \ref{ex:w0} we saw that if $W$ is a Weyl group then an
  important example of a Soergel module is
\[
D_{w_0} \cong R/(R^W_+).
\]
In fact this isomorphism holds for any finite Coxeter group $W$
with longest element $w_0$. The ``coinvariant''\footnote{W. Soergel
  pointed out that this is a bad name, as it has
  nothing whatsoever to do with coinvariants.} algebra $R/(R^W_+)$
has been studied by many authors from many points of view. However
even in this basic example it seems to be difficult to check the
hard Lefschetz theorem or Hodge-Riemann bilinear relations
directly. In the next section we will do this by hand when $W$ is
a dihedral group.
\item In \cite{EW} we work with $\hg$ a slightly larger representation
  containing the geometric representation. We do this for technical
  reasons (to ensure that the category of Soergel
  bimodules is well-behaved). However, one can deduce
  Theorem \ref{thm:ew} from the results of \cite{EW}. The idea of
  using the results for the slightly larger representation to deduce
  results for the geometric representation goes back to Libedinsky \cite{Lib}.
\item (For the experts.) In \cite{EW} we prove the results above for certain $R$-modules $\overline{B_w}$, whose definition differs subtly from that of $D_w$. However, given that $\overline{B_w}$ is indecomposable as an $R$-module, one can show easily that $\overline{B_w}$ and $D_w$ are isomorphic. This will be explained elsewhere.
\end{enumerate}

\section{The flag variety of a dihedral group} \label{dihedral}

In this final section we amuse ourselves with the coinvariant ring of
a finite dihedral group. We check the hard Lefschetz property and
Hodge-Riemann bilinear relations directly.

\subsection{Gau\ss's $q$-numbers}
We start by recalling Gau\ss's $q$-numbers. By definition
\[
[n] := q^{-n+1} + q^{-n+3} + \dots + q^{n-3} + q^{n-1} = \frac{q^n-q^{-n}}{q-q^{-1}} \in \ZM[q^{\pm 1}].
\]
Many identities between numbers can be lifted to identities between
$q$-numbers. We will need
\begin{align}
  \label{eq:1}
 [2] [n] &= [n+1] + [n-1] \\\label{eq:2}
[n]^2 &= [2n-1] + [2n-3] + \dots + [1].\\\label{eq:3}
[n][n+1] &= [2n] + [2n-2] + \dots + [2].
\end{align}
For the representation theorist, $[n]$ is the character of the simple
$\mathfrak{sl}_2(\mathbb{C})$-module of dimension $n$, and the relations above
are instances of the Clebsch-Gordan formula.

If $\z = e^{2\pi  i/2m} \in \CM$ then we can specialize $q = \z$ to
obtain algebraic integers $[n]_\z \in \RM$. Because $\z^m = -1$ we have
\begin{align}
[m]_\z = 0,  \quad [i]_\z = [m-i]_\z, \quad
[i+m]_\z = -[i]_\z.
\end{align}

Because $\z^n$ has positive imaginary part for $n<m$, it is clear that
\begin{equation} \label{pos}
\text{$[n]_\z$ is positive for $0 < n < m$.}
\end{equation}
We use this positivity in a crucial way below. Had we foolishly chosen $\z$ to be a primitive $2m^{\textrm{th}}$ root of unity with non-maximal real part, \eqref{pos} would fail.

\subsection{The reflection representation of a dihedral group}
Now let $W$ be a finite dihedral group of order $2m$. That is $S = \{
s_1, s_2 \}$ and
\[
W = \langle s_1, s_2 \; | \; s_1^2 = s_2^2 = (s_1s_2)^m = \id \rangle.
\]
Let $\hg = \RM \alpha_1^\vee \oplus \RM {\alpha_2^\vee}$ be the
geometric representation of $(W,S)$, as in \S\ref{arbitrary}. Because
$W$ is finite the form $(-,-)$ on $\hg$ is non-degenerate.  We
define simple roots $\alpha_1, \alpha_2 \in \hg^*$ by
$\alpha_1 = 2(\alpha_1^\vee, -)$ and $\alpha_2 = 2(\alpha^\vee_2, -)$. Then
the ``Cartan'' matrix is
\begin{equation} \label{Cartan}
(\langle \alpha^\vee_i, \alpha_j \rangle)_{i,j\in \{1,2 \}} = 
\left ( \begin{matrix} 2 & -\varphi \\ -\varphi & 2 \end{matrix}
\right )
\end{equation}
where $\varphi = 2 \cos(\pi/m)$. 
Note that $\varphi = \z + \z^{-1}$ where $\z = e^{2\pi
  i/2m} \in \CM$. Hence $\varphi = [2]_\z$ in the notation of the
previous section. In particular it is an algebraic integer.


\begin{ex}
Throughout we will use the first non-Weyl-group case $m = 5$ to
illustrate what is going on. In this case $[2]_\z = [3]_\z$
and the relation $[2]^2 = [3] + [1]$ gives
$\varphi^2 = \varphi + 1$. Thus $\varphi$ is the golden ratio.
\end{ex}

For all $v \in \hg^*$ we have
\[
s_1(v) = v - \langle v, \alpha_1^\vee \rangle \alpha_1 \quad \text{and}
\quad 
s_2(v) = v - \langle v, \alpha_2^\vee \rangle \alpha_2.
\] It is a pleasant exercise for the reader to verify that the set $\Phi
=  W \cdot \{ \alpha_1, \alpha_2 \}$ gives something like a root
system in $\hg^*$. We have $\Phi = \Phi^+ \cup -\Phi^+$ where
\begin{equation} \label{posroots}
\Phi^+ = \{ [i]_\z \alpha_1 + [i-1]_\z\alpha_2 \; | \; 1 \le i \le m \}.
\end{equation}

\begin{ex} For $m = 5$ one can picture the ``positive roots'' $\Phi^+$
  as follows:
\[
\begin{tikzpicture}
\node (as) at (18:2cm) {\small $\alpha_1$};
\node (a1) at (54:2cm) {\small $\varphi \alpha_1 + \alpha_2 $};
\node (a2) at (90:2cm) {\small $\varphi \alpha_1 + \varphi \alpha_2$};
\node (a3) at (126:2cm) {\small $\alpha_1 + \varphi \alpha_2$};
\node (at) at (162:2cm) {\small $\alpha_2$};
\draw[->] (0,0) to (as);
\draw[->] (0,0) to (a1);
\draw[->] (0,0) to (a2);
\draw[->] (0,0) to (a3);
\draw[->] (0,0) to (at);
\end{tikzpicture}
\]
\end{ex}

Let $T := \bigcup wSw^{-1}$. Then $T$ are precisely the elements of
$W$ which act as reflections on $\hg$ (and $\hg^*$). One has a bijection
\[
T \simto \Phi^+: t \mapsto \alpha_t
\]
such that $t(\alpha_t) = -\alpha_t$ for all $t \in T$.

\subsection{Schubert calculus}

In the following we describe Schubert calculus for the coinvariant
ring. Most of what we say here is valid for any finite Coxeter
group. A good reference for the unproved statements below is
\cite{Hiller}.

Let $R$ denote the symmetric algebra on $\hg^*$ and $H$ the
coinvariant algebra
\[
H := R/(R^W_+).
\]
For each $s \in S$ consider the divided difference operator
\[
\partial_s(f) = \frac{f - s(f)}{\alpha_s}.
\]
Then $\partial_s$ preserves $R$ and decreases degrees by
$2$. Given $x \in W$ we define
\[
\partial_x = \partial_{s_1} \dots \partial_{s_m}
\]
where $x = s_1 \dots s_m$ is a reduced expression for $x$. The operators $\partial_s$ satisfy the braid relations, and therefore
$\partial_x$ is well-defined. The operators $\partial_x$ kill
invariant polynomials and hence commute with multiplication by
invariants. In particular they preserve the ideal $(R^W_+)$
and induce operators on $H$.

Let $\pi := \Pi_{\alpha \in \Phi^+} \alpha$ denote the product of the
positive roots. For any $x \in W$ define $Y_x \in H$ as the image of $\partial_x
(\pi)$ in $H$. Because $\pi$ has degree $2\ell(w_0)$, $Y_x$ has degree
$\deg Y_x = 2(\ell(w_0) - \ell(x))$.

\begin{thm} The elements $\{ Y_x \; | \; x \in W \}$ give a basis for
  $H$. \end{thm}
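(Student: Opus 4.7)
The plan is to reduce the basis statement to linear independence within each graded piece and then isolate each coefficient by applying an appropriate divided-difference operator. By the Chevalley--Shephard--Todd theorem $\dim_{\RM} H = |W|$, and since there are exactly $|W|$ elements $Y_x$, proving linear independence suffices. Moreover $Y_x$ is homogeneous of degree $2(\ell(w_0) - \ell(x))$, so any relation decomposes into its homogeneous parts, and it is enough to show that for each fixed $k$ the set $\{Y_x : \ell(x) = k\}$ is linearly independent in $H$.

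The essential tool will be the composition rule
\[
\partial_y \partial_x = \begin{cases} \partial_{yx} & \text{if } \ell(yx) = \ell(y) + \ell(x), \\ 0 & \text{otherwise,} \end{cases}
\]
which is a formal consequence of $\partial_s^2 = 0$ together with the braid relations. Applied to $Y_x = \partial_x(\pi)$, this yields $\partial_y(Y_x) = Y_{yx}$ when the lengths add, and zero otherwise.

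Then I would fix $k$ and some $x_0$ with $\ell(x_0) = k$, suppose $\sum_{\ell(x) = k} c_x Y_x = 0$, and apply $\partial_{w_0 x_0^{-1}}$ (which makes sense since $\ell(w_0 x_0^{-1}) = \ell(w_0) - k$). For $x = x_0$ one has $(w_0 x_0^{-1}) x_0 = w_0$ with lengths summing to $\ell(w_0)$, so the result is $Y_{w_0}$. For any other $x$ with $\ell(x) = k$ the product $(w_0 x_0^{-1}) x$ cannot equal $w_0$ (else $x = x_0$); but $w_0$ is the unique element of maximal length, so $\ell((w_0 x_0^{-1}) x) < \ell(w_0) = \ell(w_0 x_0^{-1}) + \ell(x)$, and the composition rule produces $0$. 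Thus $c_{x_0} Y_{w_0} = 0$, which forces $c_{x_0} = 0$ provided $Y_{w_0} \neq 0$.

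The main obstacle I anticipate is precisely the nonvanishing $Y_{w_0} \neq 0$, i.e.\ $\partial_{w_0}(\pi) \notin (R^W_+)$. Since $\pi$ has degree $2\ell(w_0)$ and $\partial_{w_0}$ drops degrees by exactly $2\ell(w_0)$, the element $\partial_{w_0}(\pi)$ is already a scalar, so the question is whether this scalar is zero. The classical computation, by induction along a reduced expression for $w_0$ (tracking how each $\partial_{s_i}$ removes one positive root from the product), yields $\partial_{w_0}(\pi) = |W|$, a nonzero real number. For the dihedral case at hand this is a short direct calculation using the explicit description \eqref{posroots} of $\Phi^+$.
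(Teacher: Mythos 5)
The paper itself gives no proof of this theorem; the subsection opens by saying that Hiller's book \cite{Hiller} is ``a good reference for the unproved statements below,'' and this is one of them. So there is no in-paper argument to compare against. Your proposal is, however, the standard textbook proof and it is correct. The three ingredients are exactly as you describe: (i) $\dim_{\RM} H = |W|$ by Chevalley--Shephard--Todd, which holds for any finite reflection group in its geometric representation, so linear independence suffices; (ii) the operators $\partial_y$ descend to $H$ since they commute with multiplication by $R^W$, and the composition rule $\partial_y\partial_x = \partial_{yx}$ (lengths adding) or $0$ (otherwise) lets you hit a homogeneous relation with $\partial_{w_0 x_0^{-1}}$ and isolate $c_{x_0} Y_{w_0}$; (iii) $Y_{w_0} = \partial_{w_0}(\pi)$ is a nonzero scalar.

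Two small remarks on precision. First, the composition rule is not a \emph{purely} formal consequence of $\partial_s^2 = 0$ and the braid relations; one also needs the exchange/deletion property of Coxeter groups (to know that an arbitrary word reduces, via braid moves and deletions of $ss$, to a reduced one), so it is better to cite the rule as a standard fact about divided differences than to claim it is formal. Second, the heuristic that each $\partial_{s_i}$ ``removes one positive root'' from $\pi$ holds literally only at the first step, where $\partial_s(\pi) = 2\pi/\alpha_s$ because $s$ permutes $\Phi^+ \setminus \{\alpha_s\}$ and negates $\alpha_s$; it does not iterate in that simple form. The clean classical computation uses the identity $\partial_{w_0} = \pi^{-1}\sum_{w\in W}\det(w)\,w$, which together with $w(\pi) = \det(w)\,\pi$ gives $\partial_{w_0}(\pi) = |W| \neq 0$. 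For the dihedral group one can also just grind it out from the explicit list of positive roots, as you suggest. Either way the nonvanishing you flagged as the crux is a classical fact, and your argument goes through.
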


This basis is called the \emph{Schubert basis}. When $W$ is a Weyl
group each $Y_x$ maps under the Borel isomorphism to the
fundamental class of a Schubert variety \cite{BGG}.

We can define a bilinear form $ \langle -, - \rangle$ on $H$
as follows:
\[
\langle f, g \rangle := \frac{1}{2m}\partial_{w_0}(fg)
\]
Then for all $x, z \in W$ one has:
\begin{equation} \label{eq:pairing}
\langle Y_x, Y_z \rangle = \delta_{w_0, x^{-1}z}.
\end{equation}
In particular $\langle -, - \rangle$ is a non-degenerate form on $H$.

The following ``Chevalley'' formula describes the action of an element $f \in \hg^*$
in the basis $\{ Y_x \}$:
\begin{equation} \label{eq:chevalley}
f \cdot Y_x = \sum_{t \in T \atop \ell(tx) = \ell(x) -1} \langle f ,
\alpha_t^\vee \rangle Y_{tx}
\end{equation}
\begin{ex}
Figure 1 depicts the case $m = 5$. Each edge is labelled with the
coroot which, when paired against $f$, gives the scalar coefficient
that describes the action of $f$. Using \eqref{posroots} the reader can guess what
the picture looks like for general $m$.
\end{ex}

\begin{figure}\caption{The Chevalley formula for the dihedral group with $m = 5$:}
\begin{center}
\begin{tikzpicture}
\node (id) at (90:4cm) {$Y_{\id}$};
\node (2) at (54:4cm) {$Y_{s_2}$};
\node (21) at (18:4cm) {$Y_{s_2s_1}$};
\node (212) at (342:4cm) {$Y_{s_2s_1s_2}$};
\node (2121) at (306:4cm) {$Y_{s_2s_1s_2s_1}$};
\node (w0) at (270:4cm) {$Y_{w_0}$};
\node (1) at (126:4cm) {$Y_{s_1}$};
\node (12) at (162:4cm) {$Y_{s_1s_2}$};
\node (121) at (198:4cm) {$Y_{s_1s_2s_1}$};
\node (1212) at (234:4cm) {$Y_{s_1s_2s_1s_2}$};

\draw (2) [->] -- node[sloped,above] {$\alpha^\vee_2$}  (id);
\draw (12) [->] -- node[sloped,above,near start] {$\alpha^\vee_1$} (2);
\draw (121) [->] -- node[sloped,above,near start] {$\alpha^\vee_1$} (21);
\draw (1212) [->] -- node[sloped,above,near end] {$\alpha^\vee_1$} (212);
\draw (w0) [->] -- node[sloped,below] {$\alpha^\vee_1$} (2121);

\draw (1) [->] -- node[sloped,above] {$\alpha^\vee_1$}  (id);
\draw (21) [->] -- node[sloped,above,near start] {$\alpha^\vee_2$} (1);
\draw (212) [->] -- node[sloped,above,near start] {$\alpha^\vee_2$} (12);
\draw (2121) [->]  -- node[sloped,above,near end] {$\alpha^\vee_2$} (121);
\draw (w0) [->] -- node[sloped,below] {$\alpha^\vee_2$} (1212);

\draw (2121) [->] -- node[right] {$\varphi \alpha^\vee_1 + \alpha^\vee_2$} (212);
\draw (212) [->] --node[right] {$\varphi \alpha^\vee_1 + \varphi \alpha^\vee_2$}  (21);
\draw (21) [->] --node[right] {$\alpha^\vee_1 + \varphi \alpha^\vee_2$}  (2);

\draw (1212) [->] -- node[left] {$\varphi \alpha^\vee_2 + \alpha^\vee_1$} (121);
\draw (121) [->] --node[left] {$\varphi \alpha^\vee_2 + \varphi \alpha^\vee_1$}  (12);
\draw (12) [->] --node[left] {$\alpha^\vee_2 + \varphi \alpha^\vee_1$}  (1);
\end{tikzpicture}
\end{center}
\end{figure}
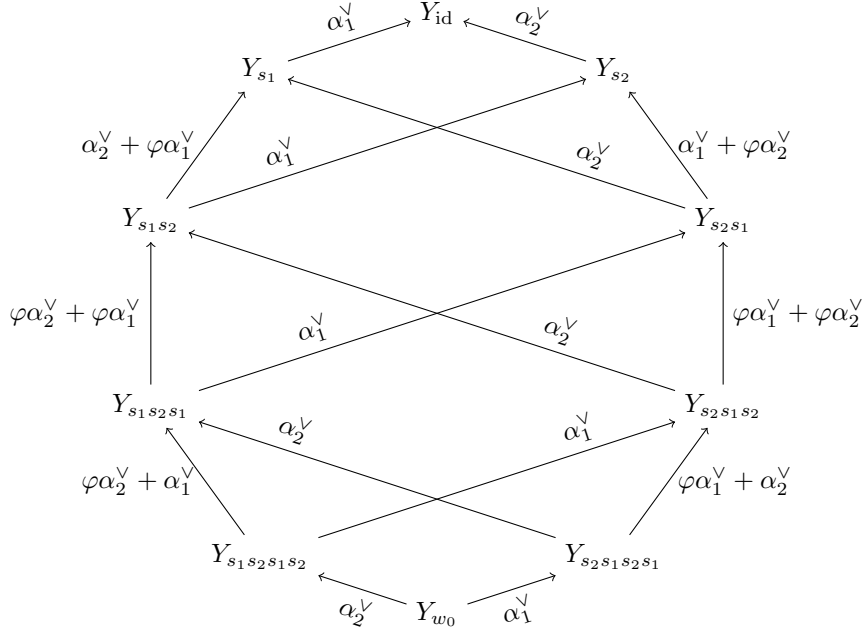

\begin{prop} Suppose that $\l \in \hg^*$ is such that $\langle \alpha_i^\vee,
  \l \rangle > 0$ for $i = 1,2$. Then multiplication by $\l$ on
  $H$ satisfies the hard Lefschetz 
  theorem, and the Hodge-Riemann bilinear relations hold.\end{prop}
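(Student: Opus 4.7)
The plan is to pass to complex coordinates on $\hg_\CM$, present $H_\CM$ concretely, and check both assertions by direct calculation; the positivity $[n]_\z > 0$ for $0 < n < m$ will reincarnate as $\mathrm{Re}(\alpha^m) > 0$ on the ample cone. Since $(-,-)$ makes $\hg \cong \RM^2$ into a Euclidean plane, one may choose complex coordinates $z, \bar z \in \hg^*_\CM$ in which the rotation $s_1 s_2$ is diagonal. A short check then shows that, for an appropriate choice of orientation, the $W$-invariants in $R$ are generated by $z\bar z$ and $z^m - \bar z^m$, so
\[ H_\CM \;=\; \CM[z,\bar z]\,\big/\,\bigl(z\bar z,\; z^m - \bar z^m\bigr), \]
with basis $\{z^k\}_{k=0}^{m} \cup \{\bar z^k\}_{k=1}^{m-1}$ under the identification $\bar z^m = z^m$. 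Writing $\lambda = \alpha z + \bar\alpha\bar z$, and rotating coordinates so that the fundamental chamber is centred on the positive real axis, the condition $\lambda \in \hg^*_+$ becomes the wedge condition $\arg \alpha \in (-\pi/(2m),\, \pi/(2m))$.

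The relation $z\bar z = 0$ renders the action of $\lambda$ transparent: for $k \ge 1$ we have $\lambda \cdot z^k = \alpha z^{k+1}$ and $\lambda \cdot \bar z^k = \bar\alpha \bar z^{k+1}$, with the boundary relations $\bar z^m = z^m$ and $z^{m+1} = 0$. Thus for $1 \le j \le m-1$, the map $\lambda^{m-2j}\colon H_\CM^{2j} \to H_\CM^{2(m-j)}$ has matrix $\mathrm{diag}(\alpha^{m-2j}, \bar\alpha^{m-2j})$ in the evident bases and is therefore invertible. The only remaining case is $j = 0$:
\[ \lambda^m \cdot 1 \;=\; (\alpha^m + \bar\alpha^m)\,z^m \;=\; 2\,\mathrm{Re}(\alpha^m)\,z^m, \]
and the wedge condition forces $\arg(\alpha^m) \in (-\pi/2, \pi/2)$, so this is strictly positive. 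Hard Lefschetz follows.

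For Hodge--Riemann I would compute the primitives $P^{2j} := \ker \lambda^{m-2j+1} \subset H^{2j}$. When $j \ge 2$ the target $H^{2(m-j+1)}$ has two distinct basis vectors $z^{m-j+1}, \bar z^{m-j+1}$ and $\lambda^{m-2j+1}$ acts diagonally with nonzero entries, so $P^{2j} = 0$. The remaining primitives are $P^0 = H^0 = \CM \cdot 1$ and $P^2_\CM = \CM \cdot v$ with $v := \bar\alpha^{m-1}z - \alpha^{m-1}\bar z$. On $P^0$ the Lefschetz form pairs $1$ with $2\,\mathrm{Re}(\alpha^m)\,z^m$, yielding $(-1)^0$-definiteness. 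For $P^2$ a direct expansion (all cross terms killed by $z\bar z = 0$) gives
\[ \langle v,\, \lambda^{m-2}v\rangle \;=\; 2\,|\alpha|^{2(m-2)}\,\mathrm{Re}(\alpha^m)\cdot\langle 1, z^m\rangle \;>\; 0, \]
but $v$ satisfies $\bar v = -v$, so the genuine real primitive vector is $iv$, and $\langle iv,\lambda^{m-2}(iv)\rangle = -\langle v,\lambda^{m-2}v\rangle < 0$, establishing $(-1)^1$-definiteness.

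The principal subtlety is this final sign flip: the canonical primitive generator of $P^2_\CM$ is anti-self-conjugate, so passing to a real generator contributes $i^2 = -1$ and reverses the sign of the Lefschetz form. This is exactly the mechanism producing the alternating Hodge--Riemann signs $(-1)^{i/2}$, and once it is accounted for everything else reduces to the single positivity $\mathrm{Re}(\alpha^m) > 0$ on the ample cone.
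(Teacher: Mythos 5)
Your proof is correct and takes a genuinely different route from the paper. The paper works throughout in the Schubert basis $\{Y_x\}$: it uses the Chevalley formula \eqref{eq:chevalley} to show that $\lambda$ acts by a tridiagonal-type matrix with positive entries, computes the determinant of the resulting $2\times 2$ blocks in each degree via $q$-number identities, and handles Hodge--Riemann by an isometry/signature argument pinned down at the middle degree via \eqref{eq:pairing}. You instead exploit the fact that the rotation subgroup of a dihedral group is cyclic: passing to the eigencoordinates $z,\bar z$ of $s_1s_2$ presents $H_\CM$ as $\CM[z,\bar z]/(z\bar z,\,z^m-\bar z^m)$, and the ideal relation $z\bar z=0$ makes $\lambda$ act diagonally on all graded pieces except $H^0$. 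This collapses both hard Lefschetz and the primitive decomposition to one scalar inequality, $\mathrm{Re}(\alpha^m)>0$, which is exactly where the ample-cone hypothesis enters; and the sign alternation in Hodge--Riemann is cleanly traced to the primitive vector in $H^2$ being anti-self-conjugate. What you lose is generality: the paper's Schubert-calculus approach is the one that plugs into the general machinery of \S\ref{arbitrary}, whereas the $z,\bar z$ diagonalization is special to rank $2$. What you gain is that the whole argument becomes a one-line verification rather than a determinant and signature chase, and the origin of the positivity $[n]_\z>0$ becomes geometrically transparent.

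Two small points worth tightening. First, ``for an appropriate choice of orientation'' should really read ``after an appropriate rotation of the complex coordinate'': which of $z^m\pm\bar z^m$ is $W$-invariant depends on where the reflecting lines sit relative to the $x$-axis, and you should note (as you implicitly do) that the same rotation that puts the fundamental chamber symmetrically about the positive real axis also makes $z^m-\bar z^m$ the invariant. Second, and more substantively, your conclusion requires $\langle 1, z^m\rangle>0$, i.e.\ that $z^m$ is a \emph{positive} multiple of the Schubert class $Y_{\id}$ once the Poincar\'e form is normalized as in \eqref{eq:pairing}. You assert this implicitly but never verify it; since the entire Hodge--Riemann sign pattern would flip if this scalar were negative, it does need a check. (It reduces to showing that the product $\pi$ of positive roots, written in $z,\bar z$, is a positive multiple of $z^m+\bar z^m=2z^m$ modulo $z\bar z$, which is a short computation with the angles of the roots once the chamber is centred as above.) With that filled in, the argument is complete.
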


\begin{proof} It is immediate from \eqref{eq:chevalley} that if $\l$ is as
  in the proposition and if $x \ne \id$ then $\l Y_x$ is a sum of various
  $Y_{z}$ with strictly positive coefficients (two terms occur if
  $\ell(x) < m-1$ and one term occurs if $\ell(x) = m-1$). Hence $\l^m
  Y_{w_0}$ is a strictly positive constant times $Y_{\id}$. In
  particular $\l^m : H^0 = \RM Y_{w_0}  \to H^{2m} = \RM Y_{\id}$ is an
  isomorphism. By \eqref{eq:pairing} we have
\[
\langle Y_{w_0}, \l^m Y_{w_0} \rangle > 0
\]
and hence the Lefschetz form is positive definite on $H^0$. 

We now fix $1 \le i < m-1$ and consider multiplication by $f \in \hg^*$ as
a map $H^{2i} \to H^{2i + 2}$. The following diagram depicts the
effect in the Schubert basis:
\begin{equation} \label{middle}
\begin{array}{c}
\begin{tikzpicture}
\node (ul) at (0,2) {$Y_a$};
\node (ur) at (3,2) {$Y_b$};
\node (ll) at (0,0) {$Y_{s_1b}$};
\node (lr) at (3,0) {$Y_{s_2a}$};

\draw (ll) [->] --node[left] {$[i]_\z\alpha^\vee_1 + [i+1]_\z\alpha^\vee_2$}  (ul);
\draw (lr) [->] --node[right] {$[i+1]_\z\alpha^\vee_1 + [i]_\z\alpha^\vee_2$}  (ur);
\draw (ll) [->] --node[sloped, near start,above] {$\alpha^\vee_1$} (ur);
\draw (lr) [->] --node[sloped,near start,above] {$\alpha^\vee_2$} (ul);

\end{tikzpicture}
\end{array}
\end{equation}
where $a$ and $b$ (resp. $s_2a$ and $s_1b$) are the unique elements of
length $\ell(w_0) - i - 1$ (resp. $\ell(w_0) - i$). Remember that $\a_i^\vee$ here represents the scalar $\langle f ,
\alpha_i^\vee \rangle$.
We now calculate the determinant:
\begin{gather*}\det \left ( \begin{matrix} [i]_\z\alpha^\vee_1 + [i+1]_\z\alpha^\vee_2 & \alpha^\vee_2\\
\alpha^\vee_1 & [i+1]_\z\alpha^\vee_1 +  [i]_\z\alpha^\vee_2 \end{matrix} \right ) =\\
 = [i]_\z[i+1]_\z (\alpha^\vee_1)^2 + ([i]^2_\z + [i+1]^2_\z -
1)\alpha^\vee_1\alpha^\vee_2 + [i]_\z[i+1]_\z(\alpha^\vee_2)^2 \\
= [i]_\z[i+1]_\z(\alpha^\vee_1)^2 + [2]_\z[i]_\z[i+1]_\z \alpha^\vee_1\alpha^\vee_2 + [i]_\z[i+1]_\z(\alpha^\vee_2)^2
\end{gather*}
(using \eqref{eq:1}, \eqref{eq:2} and \eqref{eq:3}). All $q$-numbers appearing here are positive by \eqref{pos}.

If $\lambda$ is as in the proposition, then the determinant of multiplication by $\lambda$ is positive. So $\lambda$ gives an isomorphism $H^{2i} \simto H^{2i+2}$ for each $1\le i \le
m-2$, and $\l^{m-2}$ gives an isomorphism $H^2 \simto
H^{2m-2}$. Therefore the hard Lefschetz theorem holds for $\lambda$,
with primitive classes occurring only in degrees $0$ and $2$.

It remains to check the Hodge-Riemann bilinear relations. We have already seen that the Lefschetz form on $H^0$ is positive definite. We need to know that the restriction of the Lefschetz
form on $H^2$ to $\ker \l^{m-1}$ is negative definite. Now $(\l Y_{w_0}, \l Y_{w_0}) = (Y_{w_0}, Y_{w_0}) > 0$, and if $\g \in H^2$ denotes a generator for $\ker \l^{m-1}$ then $( \l
Y_{w_0}, \g) = \langle \l Y_{w_0}, \l^{m-2} \g \rangle = \langle Y_{w_0}, \l^{m-1} \g \rangle = 0$. Hence the Hodge-Riemann relations hold if and only if the signature of the Lefschetz
form on $H^2$ is zero.

From the definition of the Lefschetz form, it is immediate that $\l : H^{2i} \to H^{2i+2}$ is an isometry with respect to the Lefschetz forms, so long as $2 \le 2i \le m-2$. Thus when $m$
is even (resp. odd) it is enough to show that the signature of the
Lefschetz form is zero on $H^m$ (resp. $H^{m-1}$).

Suppose $m$ is even. The
Lefschetz form on the middle dimension $H^m$ is the same as the pairing. By
\eqref{eq:pairing} this form has Gram matrix
\[
\left ( \begin{matrix} 0 & 1 \\ 1 & 0 \end{matrix} \right )
\]
which has signature 0.

Suppose $m=2k+1$ is odd; we check the signature of the Lefschetz form on
$H^{m-1}$. We are reduced to studying \eqref{middle} with $\ell(a)
= \ell(b) = k$ and $\ell(s_2a) =\ell(s_1b) =
k+1$. We
see by \eqref{eq:pairing} that $Y_{s_1b}, Y_{s_2a}$ is a basis dual to
$Y_b$, $Y_a$. We get that the
Lefschetz form on $H^{m-1}$ is given by
\[ \left ( \begin{matrix} \alpha^\vee_1 & [k+1]_\z\alpha^\vee_1 + [k]_\z\alpha^\vee_2 \\
[k]_\z\alpha^\vee_1 + [k+1]_\z\alpha^\vee_2 &
\alpha^\vee_2 \end{matrix} \right ), \]
and $[k]=[k+1]$ is positive. For any $\l$ as in the proposition, this
is a symmetric matrix with
strictly positive entries and negative determinant (by our 
calculation above).  Hence its signature is zero and the Hodge-Riemann
relations are satisfied as claimed.
\end{proof}

\def\cprime{$'$}

\end{document}